\newcommand{\ang}[1]{\left< #1 \right>}
\newcommand{\vols}{{\mathrm{VOL}^*}}
\title{Fractional Poincar\'{e} and logarithmic Sobolev inequalites for measure spaces}
\author{Philip T. Gressman}
\begin{document}
\maketitle
\begin{abstract}
We prove generalizations of the Poincar\'{e} and logarithmic Sobolev inequalities corresponding to the case of fractional derivatives in measure spaces with only a minimal amount of geometric structure.  The class of such spaces includes (but is not limited to) spaces of homogeneous type with doubling measures.  
Several examples and applications are given, including Poincar\'{e} inequalities for graph Laplacians, Fractional Poincar\'{e} inequalities of Mouhot, Russ, and Sire \cite{mrs2011}, and implications for recent work of the author and R. M. Strain on the Boltzmann collision operator \cite{gs2010,gs2011,gs2010III}.
\end{abstract}
\section{Introduction}
\subsection{Background}
The purpose of this note is to prove an analogue of the Poincar\'{e} inequality in an abstract setting.  Specifically, the goal is to establish an inequality of the form
\begin{equation} \int_X |f(x)|^p d \mu(x) \leq C \int_{X \times X} |f(x) - f(y)|^p K(x,y) d \mu(x) d \mu(y) \label{mrs0}
\end{equation}
for some suitable measure space $(X,\mu)$ and kernel $K(x,y)$ when $f$ belongs to an appropriate subspace of $L^p(X)$ (typically being the space of functions with vanishing mean with respect to $\mu$).  Inequalities of this type appear throughout the probability literature in the study of pure jump L\'{e}vy processes, where it is known (for example) that
\begin{equation} \int_{\R^d} |f(x)|^p d \mu(x) \leq \int_{\R^d \times \R^d} |f(x+h) - f(x)|^p d \nu(h) d \mu(x) \label{levy} \end{equation}
for any $p \geq 1$ when $\nu$ is the L\'{e}vy measure, $\mu$ is the distribution of the corresponding L\'{e}vy process at time $t=1$, and $\int f d \mu = 0$.  (In that context, \eqref{levy} is a special case of so-called $\Phi$-entropy estimates; see, for example, Wu \cite{wu2000} corollary 4.2 or  Chafa\"{\i} \cite{chafai2004} theorem 5.2 and references in  Gentil and Imbert \cite{gi2008} or Mouhot, Russ, and Sire \cite{mrs2011}).  Such inequalities are of intrinsic interest outside the probability literature as well by virtue of the analogy between the right-hand side of \eqref{levy} and the Gagliardo or Taibleson seminorms for fractional regularity spaces including the fractional $L^p$-Sobolev spaces and related constructions \cite{taibleson1963,stein1962,ams1963,gagliardo1958,af2003}.  Anisotropic versions of such constructions also appear via the space $N^{s,\gamma}$ recently introduced by the author and R. M. Strain to the study of the Boltzmann equation \cite{gs2010,gs2011,gs2010III}.

The direct study of general inequalities of the form \eqref{mrs0} was recently initiated by Mouhot, Russ, and Sire \cite{mrs2011}, who established the inequality
\[ \int_{\R^d} \!  |f(x)|^2 (1 + |\nabla V(x)|^\alpha) e^{-V(x)} dx \leq C \!  \int_{\R^d \times \R^d} \! \! \! \frac{|f(x) - f(y)|^2}{|x-y|^{d+\alpha}} e^{-\delta|x-y| -V(x)} dx dy \]
for all $f$ with $\int f e^{-V} = 0$ and some fixed $C,\delta$ when $e^{-V} \in L^1(\R^d)$ and $V \in C^2(\R^d)$ satisfies 
\begin{equation} s |\nabla V(x)|^2 - \Delta V(x) \rightarrow \infty \mbox{ as } |x| \rightarrow \infty \label{cond} \end{equation}
for some $s \in [0,\frac{1}{2})$.  There are two features of this inequality which are of particular interest when contrasted with the earlier result \eqref{levy} in the probabilistic case.  The first is that the result of Mouhot, Russ, and Sire is much more general in the sense that the measures appearing on the left- and right-hand sides enjoy a certain level of independence (whereas either one of the measures $\mu$ and $\nu$ in \eqref{levy} uniquely specifies the other).  The second feature of interest is the appearance of the $|\nabla V(x)|^\alpha$ on the left-hand side of their fractional Poincar\'{e} inequality; this is the result of a natural self-improvement that occurs for the classical Poincar\'{e} inequality with a full derivative
(which holds for the measure $e^{-V(x)} dx$ by virtue of \eqref{cond}, as shown in \cite{bbcg2008} and elsewhere).

\subsection{The main theorems}

From here forward, $X$ will refer to a topological space equipped with a nonnegative, $\sigma$-finite Borel measure $\mu$.  It will also be assumed that $X$ and $\mu$ come with a family of balls of unit radius:  specifically, $U$ will be a distinguished open neighborhood of the diagonal in $X \times X$ giving rise to two families of balls:
\begin{align*}
B_x & := \set{y \in X}{ (x,y) \in U}, \\
B^*_y & := \set{x \in X}{(x,y) \in U}.
\end{align*}
When convenient, the notation $B^1_x$ will also be used to refer to the ball $B_x$, and the point $x$ will be called the center point.  For any integer $n \geq 2$, let $B^n_x$ be the union of all balls whose center point is contained in $B^{n-1}_x$, i.e., $y \in B^n_x$ if there exist $y_1,\ldots,y_{n-1}$ such that all pairs $(x,y_1), (y_1,y_2), \ldots, (y_{n-1},y)$ are contained in $U$.  We assume that there are finite, positive constants $C$ and $\lambda_0$ such that
\begin{equation}
\mu(B_x^n) \leq C \lambda_0^n \mu(B_x) \label{subexp}
\end{equation}
holds for all $x \in X$ and all positive integers $n$, and the constant $\lambda_0$ will be called the growth constant of $X$ (we will implicitly assume that $\mu(B_x)$ is neither zero nor infinite for any $x$).  Note that the growth constant is finite when the balls $B$ arise as unit balls in a space of homogeneous type with a doubling measure.  In the specific case of Euclidean balls of radius $1$ and Lebesgue measure, the growth constant $\lambda_0$ may be taken arbitrarily close to $1$ (but always strictly greater).  Doubling measures, however, are not the only situation when the constant is finite; one may also take the balls $B$ to be unit balls with respect to a metric when the volume growth of metric balls is at most exponential.

With the structure given above, let $W$ and $W_+$ be nonnegative, measurable weight functions on $X$ such that $W_+(x) \geq W(x)$.  This pair $(W,W_+)$ will be called admissible when there exist positive constants $\lambda,\epsilon,s$ with $\lambda > \lambda_0$ and $s \in [0,1)$ so that
\begin{equation} 
\left[ 
W_+(x) \right]^s \epsilon \mu(B_y) \leq \int_{B_x^*} \left[ W(z) \right]^s \chi_{W(z) \geq \lambda W_+(x)} d \mu(z)
\label{connect}
\end{equation}
(the notation $\chi_E$ will be used for the indicator function of $E$) for any pair $(x,y) \in U$, provided that $x$ does not belong to some exceptional set $X_0$. When this exceptional set $X_0$ is nonempty, it will be assumed that it has diameter at most $1$ (meaning that $X_0 \times X_0 \subset U)$ and that $W_+(x) \mu(B_y) \leq C W(y) \mu(X_0)$ for all $x,y \in X_0$ and some fixed constant $C$.   Note that in the case $X_0 = \emptyset$ these conditions are trivially satisfied.

In this note, the question of sharp constants will not be addressed; to simplify matters, we will use the convention that $A \lesssim B$ means that there is an implied constant $C$ such that $A \leq C B$ holds uniformly over some specified family of parameters (for example, when $A$ and $B$ depend on an arbitrary function $f$, the statement $A \lesssim B$ will mean that the same constant $C$ may be used for any $f$).

With these definitions in place, the first main result is as follows:
\begin{theorem}
Let $(X,\mu,U)$ be as above and let $(W,W_+)$ be admissible.  Then for any measurable function $f$ such that $\int_{X_0} f d \mu = 0$ and $f \rightarrow 0$ uniformly as $W_+ \rightarrow \infty$ (meaning that $|f(x)| \leq \epsilon$ if $W_+(x)$ is sufficiently large), then for any fixed $p \in [1,\infty)$ one has \label{poincarethm}
\begin{equation} \int |f|^p W_+ d \mu \lesssim \int_X   \left[ \frac{1}{\mu(B_x)}  \int_{B_x} |f(x) - f(y)|^p d \mu(y)  \right] W(x) d \mu(x) \label{mainest}
\end{equation}
for some implied constant that is independent of $f$.
\end{theorem}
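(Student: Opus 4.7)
The plan is to use the admissibility condition \eqref{connect} as the engine of a chaining argument that transfers mass from low-$W_+$ regions (where $|f|^p$ dominates the LHS) to high-$W_+$ regions (where $f$ is small by hypothesis). I would start by applying \eqref{connect} with $y = x$ --- legal since $(x,x) \in U$ --- to obtain, for $x \notin X_0$,
\[\epsilon [W_+(x)]^s \mu(B_x) \leq N(x) := \int_{B_x^*} [W(z)]^s \chi_{W(z) \geq \lambda W_+(x)} \, d\mu(z).\]
Multiplying by $|f(x)|^p$, using $|f(x)|^p \leq 2^{p-1}(|f(x)-f(z)|^p + |f(z)|^p)$ inside the integral defining $N(x)$, and exploiting the key inequality $[W_+(x)]^{1-s}[W(z)]^s \leq \lambda^{s-1} W(z)$ valid on $\{W(z) \geq \lambda W_+(x)\}$, then dividing by $\mu(B_x)$ and multiplying by $[W_+(x)]^{1-s}$, yields the pointwise estimate
\[\epsilon \lambda^{1-s} W_+(x) |f(x)|^p \leq \frac{2^{p-1}}{\mu(B_x)} \int_{B_x^*} \bigl(|f(x)-f(z)|^p + |f(z)|^p\bigr) W(z) \chi_{W(z) \geq \lambda W_+(x)} \, d\mu(z).\]

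Next I would integrate in $x$ and split the right-hand side into the Gagliardo-type piece involving $|f(x)-f(z)|^p$ and the residual piece involving $|f(z)|^p$. For both, Fubini applied via $z \in B_x^* \iff x \in B_z$, combined with \eqref{subexp} and the volume-comparability $\mu(B_x) \asymp \mu(B_z)$ for adjacent points (which follows from $B_x \subset B_z^2$ together with its converse in the doubling or exponential-metric settings of interest), would identify the Gagliardo piece with a constant multiple of the RHS $R$ of \eqref{mainest} and bound the residual piece by $\lesssim \int |f|^p W \, d\mu \leq \int |f|^p W_+ \, d\mu$. The net outcome is a schematic estimate $\|f\|_{W_+}^p \lesssim C_1 R + C_2 \|f\|_{W_+}^p$. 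Rather than absorbing directly --- which would require the restrictive $C_2 < 1$ --- my plan is to iterate the pointwise bound of step one before integrating. Defining the Markov averaging operator $Tg(x) := \int g \, d\nu_x$ using the probability measure $d\nu_x \propto [W(z)]^s \chi_{W(z)\geq\lambda W_+(x)} \, d\mu(z)|_{B_x^*}$, iteration gives a telescoping $|f|^p \leq \sum_{k<n} C_0^{k+1} T^k S + C_0^n T^n(|f|^p)$ where $S$ is the Gagliardo-type functional from step one. Each application of $T$ pushes the residual's support to points with $W_+$-level at least $\lambda$ times larger, so the uniform decay hypothesis $f \to 0$ as $W_+ \to \infty$ forces $T^n(|f|^p) \to 0$, while $\sum_k \int W_+ T^k S \, d\mu$ converges as a geometric series in $\lambda_0/\lambda < 1$ once the Fubini constants from \eqref{subexp} are accounted for. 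The exceptional set $X_0$ is handled separately via $\int_{X_0} f \, d\mu = 0$, the diameter bound $X_0 \times X_0 \subset U$, and the auxiliary $W_+(x)\mu(B_y) \leq CW(y)\mu(X_0)$ for $x,y \in X_0$, which together absorb the $X_0$ contribution into the Gagliardo seminorm restricted to $X_0 \times X_0$.

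The main technical obstacle I anticipate is the convergence of the iteration: one must verify that the geometric gain $\lambda^{-(1-s)}$ per step dominates both the convexity constants from the decomposition of $|f(x)|^p$ and the Fubini constants inherited from \eqref{subexp}, and that the residual $T^n(|f|^p)$ integrates away in the limit under the qualitative hypothesis $f \to 0$ as $W_+ \to \infty$. A secondary subtlety is the Fubini step itself: the naive swap produces $\mu(B_y)$ in the denominator where the theorem's RHS requires $\mu(B_x)$, and reconciling these depends on the volume comparison $\mu(B_x) \asymp \mu(B_y)$ for $y \in B_x$; this is presumably why \eqref{connect} is stated for all $y \in B_x$ rather than only $y=x$, since the flexibility in $y$ effectively encodes precisely this comparison.
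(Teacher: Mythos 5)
Your broad strategy — build a Markov averaging operator from the admissibility condition, iterate it to push mass toward high-$W_+$ regions, use the decay hypothesis on $f$ to kill the residual, and handle $X_0$ separately — is indeed the paper's strategy. But the way you implement the iteration has a genuine gap that the paper's proof is specifically engineered to avoid. When you write $|f(x)|^p \leq 2^{p-1}\big(S(x) + T(|f|^p)(x)\big)$ and unroll it to $|f|^p \leq \sum_{k<n} C_0^{k+1}T^k S + C_0^n T^n(|f|^p)$ with $C_0 = 2^{p-1}$, you pay a fixed factor $2^{p-1}$ per step. After multiplying by $W_+$ and integrating, each $T^k$ contributes roughly $(\lambda_0/\lambda)^k$, so the geometric series converges only if $2^{p-1}\lambda_0/\lambda < 1$, i.e.\ $\lambda > 2^{p-1}\lambda_0$. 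The admissibility hypothesis gives only $\lambda > \lambda_0$, so the series need not converge. The residual term $C_0^n T^n(|f|^p)$ has the same problem: $T^n(|f|^p)(x) \leq \sup_{W_+(y)\geq \lambda^{n-1}W_+(x)}|f(y)|^p$, and the qualitative hypothesis ``$f\to 0$ as $W_+\to\infty$'' does not make this decay fast enough to beat $2^{(p-1)n}$. You flagged the first issue yourself as a ``technical obstacle''; it is in fact the central difficulty, not a bookkeeping matter, and it does not resolve under the stated hypotheses. (The usual $(1+\varepsilon)$-weighted triangle inequality tames the series but then makes the residual blow up like $(1+\varepsilon)^n$, so it does not rescue the argument either.)

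The paper's device for circumventing this is Lemma~\ref{fancylemma}, a Lyapunov-function criterion for $L^p$ boundedness of positivity-preserving operators. Rather than iterating a pointwise convexity inequality on $|f|^p$, the paper forms the cumulative operator $S_n := S_1 + TS_1 + \cdots + T^{n-1}S_1$ acting on functions of two variables, exhibits an explicit Lyapunov function $F(x,y)\sim [W_+(x)]^{-\delta}$ satisfying the hypothesis~\eqref{mega}, and concludes $\|S_n g\|_{L^p(W_+ d\mu)} \lesssim \|g\|_{L^p(dm)}$ with a bound \emph{uniform in $n$}. The free parameter $\delta$ is chosen small so that the convergence condition becomes $\lambda^{1-\delta(p-1)} > \lambda_0$, which holds whenever $\lambda > \lambda_0$; the $2^{p-1}$-per-step loss never appears because the Lyapunov lemma's nonlinear-eigenvalue argument gives $L^p$ boundedness directly, without telescoping $|f|^p$ through the triangle inequality. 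The residual is then handled cleanly by the pointwise inequality $S_n\Delta_f(x) \geq |f(x) - T^n f(x)|$ together with $T^n f(x) \to 0$ pointwise and Fatou — no quantitative decay rate of $f$ is needed. A smaller remark: your worry about reconciling $\mu(B_x)$ vs.\ $\mu(B_y)$ in the Fubini step is essentially already resolved by the admissibility condition~\eqref{connect} itself, which is formulated with $\mu(B_y)$ on the left precisely so that the kernel bound $P(x,y)\lesssim (\mu(B_y))^{-1}[W(y)/W_+(x)]^s$ comes out in the form needed after Fubini; no separate volume-comparability assumption is required. In short, the missing ingredient in your proposal is a mechanism — the paper's Lemma~\ref{fancylemma} being one such — that converts the per-step gain $\lambda^{-1}$ against the per-step volume cost $\lambda_0$ into a uniform $L^p$ bound without compounding convexity constants.
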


The second main result demonstrates that two-weight Poincar\'{e} inequalities like \eqref{mainest} automatically imply corresponding generalized logarithmic Sobolev inequalities.  These logarithmic Sobolev inequalities correspond to fractional versions of the sort of inequalities proved recently by Lott and Villani \cite{lv2009} and Sturm \cite{sturm2006I,sturm2006II}.
In this case it will be further assumed that the space $X$ is equipped with a nested sequence open sets $X \times X \supset U_0 \supset U_1 \supset U_2 \supset \cdots$ such that $U_0$ that the intersection of $U_j$ over all $j$ is the diagonal in $X \times X$.  The index $j$ is thought of as a dyadic parameter (with decreasing radius as $j$ increases), so that $U_0$ will be thought of as corresponding to ``unit'' balls.  The precise nature of these balls, however, is not important beyond the definitions just given.  To clearly distinguish these balls from the balls $B^j_x$, we will use the notation
\[ U_j(x) := \set{y \in X}{ (x,y) \in U_j} \mbox{ and } U_j^*(y) := \set{x \in X}{(x,y) \in U_j}. \]
For any pair of points $(x,y)$, let $\mathrm{VOL}^*(x,y)$ be defined to be the infimum of $\mu(U_{j+1}^*(y))$ over all $j$ such that $x \in U_j^*(y)$ (and note that $\mathrm{VOL}^*(x,y)$ will be undefined if $y$ does not belong to the unit ball $U_0(x)$).
Finally, let
\begin{align}
K_{p,\psi} (x,y) & := \frac{\psi \left( \left[ \vols(x,y) \right]^{-\frac{1}{p}} \right)}{\vols(x,y)} + \frac{1}{\mu(U_0(x))},  \nonumber \\
 ||f||_{p,\psi}^p & := \int_{X} \left( \int_{U_0(x)} |f(x) - f(y)|^p   K_{p,\psi}(x,y) d \mu(y) \right) W(x) d \mu(x), \label{singnorm}
\end{align}
where $p \in [1,\infty)$ and $\psi$ is a nonnegative, nondecreasing function on $[0,\infty)$ such that $\psi(0) > 0$ and $\lim_{t \rightarrow \infty} \psi(t) = \infty$.
\begin{theorem}
For fixed $\psi$ satisfying the hypotheses above, suppose that there exists a nonnegative, nondecreasing function $\tilde \psi$ on $[0,\infty)$ such that
\begin{equation} \psi(xy) \lesssim \psi(x) + \tilde \psi(y) \label{slowgrow} \end{equation}
for all nonnegative $x,y$ and some fixed implicit constant. \label{logsobthm}
Suppose that the two-weight Poincar\'{e} inequality
\begin{equation} \int |f|^p W_+ d \mu \lesssim \int_X   \left[ \frac{1}{\mu(U_0(x))}  \int_{U_0(x)} |f(x) - f(y)|^p d \mu(y)  \right] W(x) d \mu(x) \label{themajortheorem}
\end{equation}
holds uniformly for all $f$ in some class of functions $\mathcal{F}$ (note specifically that the pair $(W,W_+)$ is not necessarily assumed to be admissible).
Suppose that $\psi$ is slowly growing and that the nonnegative weights $W,W_+$ satisfy the inequalities
\begin{align} W_+(x) & \gtrsim W(x) \left[ \psi \left( [\mu(U_0^*(x))]^{-\frac{1}{p}} \right) + \tilde \psi\left( [W(x)]^{-\frac{1}{p}} \right)\right]  \label{sobcond1} \\
W(x)  & \lesssim \frac{1}{\mu(U_j^*(x))} \int_{U_j^*(x)} W(y) d \mu(y) \label{subsol0}
\end{align}
uniformly for all $x \in X$.
Then there is a constant $c$  such that
\begin{equation}
\int_X \psi \left( \frac{ c |f(y)| }{ ||f||_{p,\psi}} \right) \left| \frac{ c |f(y)| }{ ||f||_{p,\psi}}  \right|^p W(y) d \mu(y) \leq 1
\label{logsob} \end{equation}
uniformly for all $f in \mathcal{F}$.
\end{theorem}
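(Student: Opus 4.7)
The plan is to normalize $\|f\|_{p,\psi}$, apply the assumed Poincar\'e inequality \eqref{themajortheorem} to convert it into a linear $L^p(W_+)$ bound on $f$, and then leverage the slow growth hypothesis \eqref{slowgrow} together with both halves of \eqref{sobcond1} to upgrade this into the nonlinear $\psi$-weighted estimate \eqref{logsob}. An adaptive scale selection on the ``hard'' set where $|f|$ is locally large in $L^p(W)$ will be the main technical step.

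By homogeneity it suffices to assume $\|f\|_{p,\psi}=1$ and find a universal $c>0$ such that $\int \psi(c|f|)\,|cf|^p\,W\,d\mu\leq 1$. Since $K_{p,\psi}(x,y)\geq 1/\mu(U_0(x))$, the right-hand side of \eqref{themajortheorem} is dominated by $\|f\|_{p,\psi}^p=1$, so \eqref{themajortheorem} yields $\int |f|^p W_+\,d\mu\lesssim 1$. Substituting \eqref{sobcond1} (and using $\psi(0)>0$ to compare $W_+$ with $W$) produces three simultaneous estimates:
\[
\int |f|^p W\,d\mu,\ \int |f|^p\psi\bigl([\mu(U_0^*(x))]^{-\tfrac{1}{p}}\bigr)W\,d\mu,\ \int |f|^p\tilde\psi\bigl([W(x)]^{-\tfrac{1}{p}}\bigr)W\,d\mu\ \lesssim\ 1.
\]

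To bound the log-Sobolev integrand I apply \eqref{slowgrow} pointwise with the factorization $c|f(x)|=[\mu(U_0^*(x))]^{-1/p}\cdot c|f(x)|[\mu(U_0^*(x))]^{1/p}$, which gives
\[
\psi(c|f(x)|)\lesssim\psi\bigl([\mu(U_0^*(x))]^{-\tfrac{1}{p}}\bigr)+\tilde\psi\bigl(c|f(x)|[\mu(U_0^*(x))]^{\tfrac{1}{p}}\bigr).
\]
After multiplying by $|cf|^p W$ and integrating, the $\psi$ term is $O(c^p)$ by the estimates just obtained. On $\{x:c^p|f(x)|^p\mu(U_0^*(x))W(x)\leq 1\}$, monotonicity of $\tilde\psi$ together with the third estimate gives $O(c^p)$ for the $\tilde\psi$ piece as well. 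The remaining work is on the hard set $\mathcal{H}=\{x:c^p|f(x)|^p\mu(U_0^*(x))W(x)>1\}$.

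The main obstacle is the $\mathcal{H}$-contribution, and my approach is to pass to a finer scale: for each $x\in\mathcal{H}$ select the minimal $j(x)\geq 1$ with $c^p|f(x)|^p\mu(U_{j(x)}^*(x))W(x)\leq 1$ (well defined since $\mu(U_j^*(x))\to 0$) and reapply \eqref{slowgrow} at this scale. This reduces the $\tilde\psi$ argument to $\tilde\psi(1)$ while producing a factor of $\psi\bigl([\mu(U_{j(x)}^*(x))]^{-1/p}\bigr)$, which matches in size the $\psi(\vols(x,y)^{-1/p})/\vols(x,y)$ piece of $K_{p,\psi}$ when $y$ sits at dyadic distance $j(x)$ from $x$. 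Provided $|f|$ drops by a definite factor on a $W$-substantial portion of the scale-$j(x)$ neighbourhood of $x$---and this is where the subsolution condition \eqref{subsol0} enters, allowing the pointwise weight $W(x)$ to be commuted with a $W$-average over $U_{j(x)}^*(x)$---this contribution is absorbed into the fractional part of $\|f\|_{p,\psi}^p=1$. The residual $x\in\mathcal{H}$ on which no such drop occurs are handled by a Vitali-type disjointification of the neighbourhoods $U_{j(x)}^*(x)$ together with $\int |f|^p W\,d\mu\lesssim 1$. Choosing $c$ sufficiently small then absorbs all the $O(c^p)$ constants into the $1$ on the right-hand side of \eqref{logsob}.
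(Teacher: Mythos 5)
Your opening moves are correct and parallel the paper's: normalizing $\|f\|_{p,\psi}=1$, using $K_{p,\psi}\geq 1/\mu(U_0(x))$ to get $\int|f|^pW_+\,d\mu\lesssim 1$ from \eqref{themajortheorem}, decomposing $W_+$ via \eqref{sobcond1}, and factoring inside $\psi$ with \eqref{slowgrow}. The reduction to the hard set $\mathcal{H}$ and the choice of the minimal scale $j(x)$ are the right ideas in spirit. But at that point there is a genuine gap: once you arrive at the quantity $\int_{\mathcal{H}}|cf(x)|^p\,\psi\!\left([\mu(U_{j(x)}^*(x))]^{-1/p}\right)W(x)\,d\mu(x)$, the only resource available to absorb it is the $\psi/\vols$ part of $\|f\|_{p,\psi}^p$, and your proposed mechanism for the absorption is not a proof. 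Your ``drop'' case assumes that $|f|$ falls by a definite factor on a $W$-substantial portion of $U_{j(x)}^*(x)$; this cannot simply be ``provided.'' The complementary ``no-drop'' case is exactly where the argument must do real work, because there $|f(x)-f(z)|$ is small on the relevant neighborhood and the fractional seminorm gives nothing at that scale. Your proposed fallback --- a Vitali-type disjointification of the $U_{j(x)}^*(x)$ --- is unavailable at the level of generality of the theorem: the $U_j$ are an abstract nested family with no assumed doubling or engulfing property, so there is no covering lemma to invoke. Even granting some covering, the counting you would obtain ($N\lesssim c^p$ disjoint critical neighborhoods) controls the number of stopping neighborhoods, not the integral $\int_{\mathcal H}|cf|^p\psi(\cdot)W\,d\mu$ over the whole hard set, since $j(x)$ varies over $\mathcal H$ and the unbounded $\psi$-factor cannot be decoupled from $|f(x)|$ without running in a circle.

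The paper avoids all of this by comparing $f(y)$ to its $W$-weighted \emph{averages} $a_j$ over the shrinking sets $U_j^*(y)$, rather than running a stopping time on pointwise sub-level sets. The elementary telescoping statement (lemma \ref{logsoblemma}) then performs the adaptive scale selection for you: one stops at the first $k$ with $|a_k-F(y)|\leq\theta|F(y)|$, so that $|a_k|\gtrsim|F(y)|$ is controlled a priori by $[\mu(U_k^*(y))]^{-1/p}$ (via Jensen, \eqref{subsol0}, and the already-established $L^p(W_+)$ bound), while the companion jump $|a_{k-1}-F(y)|$ is \emph{large} and, after another Jensen application, is bounded by the scale-$(k-1)$ piece of the $\psi/\vols$ integral. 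The pointwise supremum over scales that appears in \eqref{super2} is then majorized by a single $j=0$ integral because the successive $U_j^*$ are nested, and one integrates in $y$ and applies Fubini without any covering argument. To repair your proposal you would essentially need to rediscover this lemma: replace the pointwise stopping rule $c^p|f(x)|^p\mu(U_j^*(x))W(x)\leq 1$ by the averaged stopping rule ``$a_j$ is within a fixed factor of $f(x)$,'' at which point the ``no-drop'' alternative disappears because the last scale before stopping automatically supplies the needed difference $|a_{j-1}-F|$ inside the seminorm.
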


\subsection{Comments and examples}
In this section we record several comments and examples with the hope that the reader will find them illuminating.  \label{commentsec}
 
 1.  Note that there is no required regularity of the weights $W$ and $W_+$.  Instead, the admissibility condition \eqref{connect} is only sensitive to what one might think of as the bulk decay properties of the weights; heuristically, when $s=0$ and $W=W_+$, for example, \eqref{connect} will be true when there is a nontrivial fraction of the ball $B_x^*$ on which $W$ is at least as large as $\lambda W(x)$.  A somewhat cleaner but less general substitute for \eqref{connect} is given by
\begin{equation}  \left[ \lambda W_+(x) \right]^s \left[ \mu(B_x^*) + \epsilon \mu(B_y) \right] \leq \int_{B_x^*} \left[ W(z) \right]^s d \mu(z), \label{connectalt} \end{equation}
when $s \in (0,1)$ (as long as $s > 0$, the inequality \eqref{connectalt} trivially implies \eqref{connect}),
which further suggests that the main thrust of \eqref{connect} is that the average of the weight on balls $B_x^*$ should be uniformly proportionally larger than the value of the weight at the center of the ball.  In the specific case when $W := e^{-V}$, the inequality \eqref{connectalt} is, in some sense, a global analogue of the differential condition 
\begin{equation} s |\nabla V(x)|^2 - \Delta V(x) \geq \rho > 0, \label{differential}
\end{equation}
for some $s \in (0,1)$, which will be shown explicitly at the end of this note via lemma \ref{diflem}.  The point is that the conditions \eqref{connect} and \eqref{connectalt} are significantly less-restrictive hypotheses than \eqref{differential} since they concern only balls of a fixed scale, whereas \eqref{differential} implies related inequalities for balls at all scales.  A corollary of this implication is the following (also to be proved in the final section):
\begin{corollary}
Suppose $V \in C^2(\R^d)$.  Let $W(x) := e^{-V(x)}$ and \label{bigcor}
\[ W_+(x) := W(x) + \frac{1}{m(B_x^1)} \int_{B_x^1} W(y) dy \]
where $m$ and $dy$ refer to Lebesgue measure and $B_x^1$ is the Euclidean ball of radius $1$ centered at $x$.  Suppose also that $\int_{\R^d} e^{-V(x)} dy = 1$. 
If there is an $s \in (0,1)$ and $\rho > 0$ such that \eqref{differential}
holds when $|x| > R$, then
\[ \int_{\R^d} |f(x)|^p W_+(x) dx \lesssim \int_{\R^d} \left[ \int_{B_x^1} |f(x) - f(y)|^p dy \right] W(x) dx \]
holds uniformly for all $f$ with $\int f W = 0$ and any fixed $p \in [1,\infty)$.
\end{corollary}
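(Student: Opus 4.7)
My plan is to apply Theorem \ref{poincarethm} with $X = \R^d$, $\mu$ equal to Lebesgue measure, and $U = \{(x,y) : |x-y| \leq 1\}$. The subexponential volume-growth condition \eqref{subexp} is trivial here, with growth constant $\lambda_0$ arbitrarily close to $1$, since Euclidean balls grow polynomially. The uniform-decay hypothesis on $f$ in Theorem \ref{poincarethm} is vacuous because $W_+$ is bounded (as $V$ is bounded below on compacts and $e^{-V} \in L^1$). So only the admissibility of $(W, W_+)$ and a reconciliation of the mean-zero conditions remain.

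To verify the admissibility \eqref{connect} on the unbounded region, I would invoke the forthcoming Lemma \ref{diflem}, which, per the discussion surrounding \eqref{differential}, is precisely the tool that converts the pointwise differential condition into the integrated version \eqref{connectalt} (and hence \eqref{connect}). Applied on the exterior region $\{|x| > R + c_0\}$, with $c_0$ a buffer chosen large enough to accommodate the unit-scale averaging built into the definition of $W_+$ and the domain of integration $B_x^*$ in \eqref{connect}, this yields admissibility on the complement of a compact set.

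The delicate piece is the compact region $\{|x| \leq R + c_0\}$, on which both $W$ and $W_+$ are bounded above and below by positive constants, so \eqref{connect} genuinely fails. Since the exceptional set $X_0$ in Theorem \ref{poincarethm} is constrained to have diameter at most $1$, it cannot by itself absorb this compact region when $R$ is large, and this is the principal obstacle. I would circumvent it by replacing $V$ with an auxiliary $\tilde V \in C^2(\R^d)$ which agrees with $V$ outside $\{|x| \leq R + c_0 + 1\}$ and for which \eqref{differential} holds globally; adding a term of the form $A \eta(x) |x|^2$ with $\eta$ a suitable cutoff and $A$ large enough is sufficient. Lemma \ref{diflem} then gives global admissibility of the modified pair $(\tilde W, \tilde W_+)$, and since $\tilde V - V$ is bounded and compactly supported, $\tilde W$ and $W$ (resp.\ $\tilde W_+$ and $W_+$) differ by bounded multiplicative factors on the modification region and coincide elsewhere, so the Poincar\'{e} estimate for $\tilde W$ implies the one for $W$ up to constants.

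Finally, I would bridge the mean-zero conditions. Choose $X_0$ to be any ball of diameter at most $1$, set $c = \mu(X_0)^{-1} \int_{X_0} f \, d\mu$, and let $g := f - c$; then $\int_{X_0} g \, d\mu = 0$, and the right-hand side of \eqref{mainest} is invariant under $f \mapsto g$. The hypothesis $\int f W = 0$ together with $\int W = 1$ gives $c = -\int g W$, whence $|c|^p \leq \int |g|^p W \leq \int |g|^p W_+$ by Jensen's inequality. Since $\int W_+ = 2$ by Fubini, it follows that $\int |f|^p W_+ \lesssim \int |g|^p W_+$, and Theorem \ref{poincarethm} applied to $g$ closes the argument. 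The hardest ingredient is the compact-region modification described above; everything else is bookkeeping once Lemma \ref{diflem} is in hand.
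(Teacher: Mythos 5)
Your overall scaffolding matches the paper: apply Theorem~\ref{poincarethm} on $\R^d$ with Lebesgue measure and unit Euclidean balls, invoke Lemma~\ref{diflem} on the exterior region, patch something on a compact region, and reconcile the mean-zero condition via Minkowski/Jensen. That last bookkeeping step is fine, and you correctly note the decay hypothesis on $f$ is vacuous since $W \in L^\infty$.

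However, the compact-region modification you propose has a genuine gap. You ask for a $\tilde V \in C^2(\R^d)$ agreeing with $V$ outside a compact set for which \eqref{differential} holds \emph{globally}. No such $\tilde V$ can exist: since $e^{-\tilde V} \in L^1$ (it agrees with $e^{-V}$ at infinity), $\tilde V$ attains an interior minimum $x_*$, where $\nabla \tilde V(x_*) = 0$ and $\Delta \tilde V(x_*) \geq 0$, so $s|\nabla \tilde V(x_*)|^2 - \Delta \tilde V(x_*) \leq 0 < \rho$. Your concrete suggestion of adding $A\eta(x)|x|^2$ makes this worse near the origin, not better: at $x=0$ with $\eta \equiv 1$ nearby, $\nabla \tilde V(0) = \nabla V(0)$ but $\Delta \tilde V(0) = \Delta V(0) + 2Ad$, so the left side of \eqref{differential} \emph{decreases} by $2Ad$. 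The exceptional set $X_0$ is not a luxury here, it is structurally necessary, and it must be small (diameter at most $1$). The paper handles this differently: it leaves $V$ alone and instead replaces the \emph{weight itself} on $|x| \leq R+1$ by the explicit radially decreasing exponential $\widetilde W(x) := A e^{-3\lambda(|x|-R-1)}$ (with $\widetilde W_+ = \widetilde W$ there), takes $X_0$ to be the ball of radius $\tfrac12$ at the origin, and verifies the admissibility condition \eqref{connect} \emph{directly} on the annulus $\tfrac12 \leq |x| \leq R+1$ using that $\widetilde W$ grows by a fixed factor as one moves inward within $B_x^*$; for $|x| > R+1$ it uses Lemma~\ref{diflem} as you intend. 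Since $\widetilde W \approx W$ and $\widetilde W_+ \approx W_+$ up to constants, the estimate transfers back to the original pair. To repair your argument you would need to replace the ``make \eqref{differential} global'' step with either the paper's explicit exponential patch on the compact region, or some other construction that verifies \eqref{connect} directly on a large annulus while keeping $X_0$ a small ball.
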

This result should be compared to the result of Mouhot, Russ, and Sire \cite{mrs2011}.  It is perhaps also worth noting that it may be shown that \eqref{differential} holding outside a compact set is a sufficient condition for the classical Poincar\'{e} inequality as well; see \cite{bbcg2008} and \cite{villani2009}.  The dependence on only one scale allows for corollaries of \eqref{mainest} which are, in fact, entirely discrete.  If $X$ is a connected graph (finite or infinite) whose vertices have valence at most $k$ and $W$ decays exponentially rapidly in the distance to some fixed point (with the rate of decay depending on $k$), then \eqref{mainest} reduces when $p=2$ to the Poincar\'{e} inequality for the graph Laplacian with the neighborhood $U$ consisting of all pairs of vertices $x$ and $y$ which are connected by an edge.

2. The fact that the ball $B_x$ in \eqref{mainest} is generally compactly supported (e.g., in $\R^d$ with the usual balls) and that the measure on the right-hand side of \eqref{mainest} is absolutely continuous with respect to $\mu \times \mu$  means that the right-hand side of \eqref{mainest} will, in general, be substantially smaller than corresponding seminorms where the measure is singular and  decays slowly away from the diagonal.  In fact, if the inequality
\[ \int_{B_y^*} \frac{W(x)}{\mu(B_x)} d \mu(x) \lesssim W_+(y) \]
holds uniformly in $y$, then both sides of \eqref{mainest} can be quickly seen to be comparable.  Rapid decay away from the diagonal is especially crucial:  when $W(x) dx$ is a probability measure on $\R^d$, for example, Jensen's inequality dictates that
\[ \int \left| f(x) - \int f(y) W(y) d y \right|^p W(x) dx \leq \int \! \! |f(x) - f(y)|^p W(x) W(y) d x d y. \]
Thus if $W(x) \leq C e^{-2 \delta |x|}$ (which holds, for example, when $W$ is log concave, as shown in \cite{bbcg2008}) then $W(x) W(y) \leq C e^{-\delta |x-y|} \left[ W(x) + W(y) \right]$; consequently
\[\int_{\R^d} \left| f(x) \right|^p W(x) dx \leq 2 C \int_{\R^d \times \R^d} |f(x) - f(y)|^p W(x) e^{-\delta |x-y|} dx dy \]
holds immediately for all functions $f$ with mean zero.  In general, the decay away from the diagonal should be faster than the decay of the weight $W$ itself.

3. In practice, the exceptional set $X_0$ need not always have diameter $1$ for the theorem to hold; it is, however, necessary in such cases that one should be able to modify $W$ and $W_+$ up to bounded multiplicative factors so that the new pair of weights has a new exceptional set of small diameter.  This idea is illustrated in the following corollary:
\begin{corollary}
Suppose $\Omega \subset \R^d$ is open, bounded, and the $\frac{1}{2}$-neighborhood of $\Omega$ (all points with Euclidean distance strictly less than $\frac{1}{2}$ from $\Omega$) is connected.  Then \label{connectcor}
\[ \int_\Omega |f(x)|^p dx \lesssim \int_{ \Omega \times \Omega} |f(x) - f(y)|^p \chi_{|x-y| < 1} dx dy \]
holds uniformly for all $f$ with $\int_\Omega f = 0$ and any fixed $p \in [1,\infty)$.
\end{corollary}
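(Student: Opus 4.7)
The plan is to apply Theorem \ref{poincarethm} to the structure $X = \Omega$ with Lebesgue measure $\mu$ and $U := (\Omega \times \Omega) \cap \{(x,y) : |x-y| < 1\}$, so that $B_x = B(x,1) \cap \Omega$. The boundedness of $\Omega$ immediately gives $\mu(B_x^n) \leq \mu(\Omega)$ for every $n$, and the connectedness of the $\frac{1}{2}$-neighborhood of $\Omega$ guarantees, via a chain/compactness argument, that the iterates $B_x^n$ exhaust $\Omega$ in a uniformly bounded number of steps; the same geometry yields the uniform lower bound on $\mu(B_x)$ that is needed to make the subexponential growth condition \eqref{subexp} hold with some finite $\lambda_0$ and simultaneously controls the factor $\frac{1}{\mu(B_x)}$ appearing on the right-hand side of \eqref{mainest}.

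For the exceptional set and weights, I would fix a small open ball $X_0 \subset \Omega$ of Euclidean diameter less than $1$, and define the chain-distance $n(x) := \min\{m : x \in B_{x_0}^m \text{ for some } x_0 \in X_0\}$, which is uniformly bounded on $\Omega$. Setting $W(x) = W_+(x) := \Lambda^{-n(x)}$ for a parameter $\Lambda > \lambda_0$ chosen close to $1$, the admissibility condition \eqref{connect} with $s \in (0,1)$ reduces to showing that for each $x \notin X_0$ the ball $B_x^*$ contains a subset of measure $\gtrsim \mu(B_y)$ on which $n(z) < n(x)$. This would follow because any minimizing chain $x_0, x_1, \ldots, x_{n(x)} = x$ has $x_{n(x)-1} \in B_x^*$ with $n(x_{n(x)-1}) = n(x)-1$, and perturbing the terminal step shows that a small open neighborhood of $x_{n(x)-1}$ in $\Omega$ consists of points with chain-distance at most $n(x)-1$; choosing chains with uniform slack then gives the required uniform lower bound. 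The exceptional-set condition on $X_0$ is immediate from volume comparability since $W$ is essentially constant there.

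Because $W$ and $W_+$ are bounded above and below by positive constants on the bounded set $\Omega$, applying Theorem \ref{poincarethm} to $g := f - c$ with $c := \frac{1}{\mu(X_0)} \int_{X_0} f$ (so that $\int_{X_0} g = 0$) gives
\[ \int_\Omega |f - c|^p dx \lesssim \int_{\Omega \times \Omega} |f(x)-f(y)|^p \chi_{|x-y|<1}\, dx\, dy. \]
The hypothesis $\int_\Omega f = 0$ yields $c\,\mu(\Omega) = -\int_\Omega (f-c)$, so H\"{o}lder's inequality gives $|c|^p \mu(\Omega) \leq \int_\Omega |f-c|^p$; combining with $|f|^p \lesssim |f-c|^p + |c|^p$ finishes the proof. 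The main obstacle I expect is the verification of admissibility with uniform constants --- specifically, showing that minimizing chains can be chosen with uniform slack $|x_i - x_{i+1}| < 1 - \eta$ for some $\eta > 0$ depending only on $\Omega$ --- which requires a compactness argument applied to paths in the connected $\frac{1}{2}$-neighborhood of $\Omega$ combined with the openness of $\Omega$ to produce uniformly sized predecessor neighborhoods.
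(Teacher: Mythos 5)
Your overall strategy is aligned with the paper's: construct an integer-valued level function on $\Omega$ measuring ``distance'' from a small seed ball $X_0$, exponentiate it to produce a bounded admissible weight, apply Theorem~\ref{poincarethm}, and finally shift $f$ by the constant $c = \mu(X_0)^{-1}\int_{X_0} f$ to reconcile the mean-zero conditions. That last reduction is correct and fills in a step the paper leaves implicit. However, the core of the argument --- verification of admissibility for $W = W_+ = \Lambda^{-n(x)}$ --- has a genuine gap, and you have correctly located it but not resolved it.

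The problem is that the predecessor set $\{z \in B_x^* \cap \Omega : n(z) \leq n(x)-1\}$ can have arbitrarily small Lebesgue measure even for a fixed $\Omega$, while $\mu(B_y)$ need not shrink with it, so \eqref{connect} fails with any uniform $\epsilon > 0$. A concrete counterexample in $\R^1$: take $\Omega = (-0.1, 0.05) \cup (1, 2)$ (whose $\tfrac{1}{2}$-neighborhood $(-0.6, 2.5)$ is connected) and $X_0 \subset (-0.1, 0.05)$. For $x = 1.05 - \delta$ with small $\delta > 0$ one has $n(x) = 2$, and the only points $z \in B(x,1) \cap \Omega$ with $n(z) = 1$ are those in $(0.05-\delta, 0.05)$, of measure $\delta \to 0$; meanwhile $\mu(B_y) \approx 1$ for $y = 1.5$. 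The ``uniform slack'' you invoke does not rescue this: the \emph{minimizing} chain here is forced to use a step of length $1 - \delta$, so no uniform $\eta$ exists for the unmodified distance $n$. One could instead define the chain distance with a built-in slack $\eta$ (requiring steps $<1-\eta$), prove by a separate compactness argument that some $\eta > 0$ makes $n_\eta$ finite and bounded on $\Omega$, and then run a continuity-of-measure argument on $x \mapsto m(B(x,1) \cap \{n_\eta \le m\} \cap \Omega)$ to get the uniform lower bound --- but none of this is in your proposal, and it is substantially more work than you suggest.

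The paper sidesteps the entire difficulty by \emph{building the measure lower bound into the construction}: it defines $O_1$ to be a small Euclidean ball inside $\Omega$ and then sets $O_n := \{y : m(B_y \cap O_{n-1} \cap \Omega) > c/n\}$, so that $V(x) = n$ (i.e.\ $x \in O_n\setminus O_{n-1}$) \emph{automatically} gives $m(B_x \cap \{V \le n-1\} \cap \Omega) > c/n \ge c/N$. Admissibility of $(e^{-V},e^{-V})$ with $\lambda = e$ is then a one-line consequence of the definition. The remaining work (showing $\Omega \subset \bigcup_n O_n$ via a disjoint-open-sets argument using connectedness of the $\tfrac{1}{2}$-neighborhood, and then $\Omega \subset O_N$ for finite $N$ by compactness of $\overline{\Omega}$) replaces the delicate chain-slack analysis you would need. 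So the two constructions are not equivalent: yours requires repairing with a slack parameter and extra compactness arguments, whereas the paper's definition of the level function makes admissibility immediate.
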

\begin{proof}
Let $m(E)$ denote the Lebesgue measure of the set $E$.
Fix some point $x \in \Omega$ and let $O_1$ be any Euclidean ball of radius at most $\frac{1}{2}$ contained entirely within $\Omega$.  For $n \geq 2$, let
\[ O_n := \set{ y \in \R^d}{m(B_y \cap O_{n-1} \cap \Omega) > \frac{c}{n}}. \]
Clearly each set $O_n$ is open, and if $c$ is chosen small enough that $O_1 \subset O_2$ then by induction the sets $O_n$ will be nested and increasing.  Now suppose  $x \not \in {\mathcal O} := \bigcup_{n=1}^\infty O_n$.  By monotone convergence,
\[m(B_x \cap O_n \cap \Omega) \rightarrow m(B_x \cap {\mathcal O} \cap \Omega) \]
as $n \rightarrow \infty$.  Since $x \not \in {\mathcal O}$, necessarily $m(B_x \cap {\mathcal O} \cap \Omega) = 0$.  In particular, this means that the open set $B_x \cap {\mathcal O} \cap \Omega$ must be empty.  
Now suppose that there is a point $z \in \Omega \setminus {\mathcal O}$.  
Since $z \not \in {\mathcal O}$, $m(B_z \cap O \cap \Omega) = 0$, meaning that $z$ must be at least a distance $1$ away from ${\mathcal O} \cap \Omega$.  In turn, this means that the $\frac{1}{2}$-neighborhoods of $\Omega \cap {\mathcal O}$ and $\Omega \setminus {\mathcal O}$ must be disjoint.  Assuming that the $\frac{1}{2}$-neighborhood of $\Omega$ is connected then implies precisely that $\Omega \subset {\mathcal O}$.
Now since the closure of $\Omega$ is compact, there must be a finite $n$ for which $\Omega \subset O_n$.  For each $x \in \Omega$, let $V(x)$ equal the smallest $n$ for which $x \in O_n$.  The pair $(e^{-V}, e^{-V})$ is admissible with $\lambda = e$ and $X_0 = O_1$.  However $e^{-V}$ is bounded above and below on $\Omega$ 
so the corollary holds by virtue of the main theorem applied to the pair $(e^{-V}, e^{-V})$.
\end{proof}
This corollary should be compared to the classical case, in which the domain $\Omega$ must necessarily be connected and additional conditions like an interior cone condition or belong to the class of John domains (see \cite{lsy2003,hajlasz2001}).  Other counterexamples can be found by considering graph Laplacians on disconnected graphs (since, in this case, the null space of the Laplacian contains nonconstant functions).

4. Because of the nonlocality of the right-hand side of \eqref{mainest}, the magnitude of possible self-improvement of this inequality (i.e., the size of the ratio $W_+/W$) can be exponentially larger than what is possible in the classical Poincar\'{e} case or in the work of Mouhot, Russ, and Sire.  For example, applying the main theorem on $\R^d$ with $\mu$ equal to the Lebesgue measure and $B_x$ equal to the Euclidean ball of radius $1$, it is possible to achieve the inequality
\begin{equation} \int_{\R^d} |f(x)|^p e^{\epsilon s |x|^{s-1}} e^{-|x|^s} d x \lesssim \int_{|x-y| \leq 1}  |f(x) - f(y)|^p e^{-|x|^s} dx dy \label{bigex}
\end{equation}
for any fixed $\epsilon \in [0,1)$ and  $s \geq 1$ when $f$ has mean zero.  In most cases, the $W_+$ appearing in corollary \ref{bigcor} will, in the same way, be substantially larger than $(1+ |\nabla V(x)|^\alpha)e^{-V(x)}$ for any fixed power of $\alpha$.

5.  The inequality \eqref{mainest} can be used to establish Dirichlet-type fractional Poincar\'{e} inequalities as well as the usual Neumann variety.  As in the example above, we also have that
\[ \int_{\R^d} |f(x)|^p e^{\epsilon s |x|^{s-1}} e^{|x|^s} d x \lesssim \int_{|x-y| \leq 1}  |f(x) - f(y)|^p e^{|x|^s} dx dy \]
for any fixed $\epsilon \in [0,1), s \geq 1$ when $f \rightarrow 0$ as $x \rightarrow \infty$.  In the Neumann case (when $\int W_+ d\mu = 1$) the inequality \eqref{mainest} also holds under the usual assumption that $\int f W_+ d \mu = 0$ instead of $\int_{X_0} f d \mu = 0$ since,  by Minkowski's inequality,
\[ \left(\int  \left| f (x) - \int f W_+ d \mu \right|^p W_+(x) d \mu(x) \right)^\frac{1}{p} \leq 2 \left( \int \left| f(x) - c \right|^p W_+ d \mu \right)^{\frac{1}{p}} \]
for any function $f$ and any constant $c$. 

6. Another example of interest concerns the seminorms connected to the Boltzmann collision operator \cite{gs2010,gs2011,gs2010III}.  If one fixes $(d(v,v'))^2 = |v-v'|^2 + \frac{1}{4} (|v|^2 - |v'|^2)^2$ for $v,v' \in \R^d$ and defines $\ang{v} := \sqrt{1 + |v|^2}$, then
\[ \int_{\R^d} \! |f(v)|^2 \ang{v}^{\alpha} e^{- |v|^2} dv \lesssim \int_{d(v,v') \leq 1} \! \! |f(v) - f(v')|^2 \ang{v}^{\alpha+1} e^{-|v|^2} dv dv' \]
when $f$ has mean zero and $\alpha$ is any fixed real number, giving a Poincar\'{e} inequality between the norm $L^2_{\gamma+2s}$ and the seminorm term defining $N^{s,\gamma}$.

7.  Regarding theorem \ref{logsobthm}, the examples of $\psi$ and $\tilde \psi$ of particular interest include
\[ \psi(x) := \log^\alpha ( e + x) \mbox{ and } \tilde \psi(x) := \psi(x), \]
\[ \psi(x) := e^{c \log^{\alpha} (e + x)} \mbox{ and } \tilde \psi(x) := e^{c \log^{\frac{\alpha}{1-\alpha}}(e+x)}, \]
where $\alpha \geq 0$ in the first pair and $c \geq 0$, $\alpha \in [0,1)$ in the second.  As in comment 4, tremendous gains in the weight $W_{+}$ translate to tremendous gains in the generalized logarithmic Sobolev inequality.  Returning to \eqref{bigex}, in dimension $d$ we have that
\[ ||f||_{L^p e^{c \log^{\alpha} L}(e^{-|x|^s} dx)} \lesssim \left( \int_{|x-y| \leq 1} |f(x) - f(y)|^p ~ \frac{e^{c \log^{\alpha} (1 + |x-y|^{-1})}}{|x-y|^d} dx dy \right)^{\frac{1}{p}} \]
provided $\alpha \leq \frac{s-1}{2s-1}$ and $c$ is sufficiently small (depending on $s, \alpha,d$, and $p$).  The particular case when $p = s = 2$ should be contrasted with the classical logarithmic Sobolev inequality of Gross \cite{gross1975}, who showed embedding into $L^2 \log L$ (with sharp constant equal to $1$ independent of dimension, a feat with regard to which the present note cannot compete) and nonembedding into $L^2 \log L \log \log L$.  Recent logarithmic Sobolev inequalities due to Lott and Villani \cite{lv2009} and Sturm \cite{sturm2006II} are also implied in many cases by theorem \ref{logsobthm}.  Roughly speaking, the common approach of these earlier works is that they assume more structure and deal with smoother functions (corresponding in the appropriate sense to a ``full'' derivative) and are, in turn, able to make meaningful statements about the various constants involved.  The present paper, by contrast, assumes minimal geometric structure and considers functions with lower degrees of smoothness at the cost of poor understanding of the constants.

8.  While generally the singularity of $K_{p,\psi}$ is only slightly stronger than the natural scaling $( \vols(x,y))^{-1}$ (since $\psi(t)$ will be $o(t^\epsilon)$ for any $\epsilon > 0$), it is nevertheless possible to do slightly better if one makes additional assumptions about the measure $\mu$ (specifically corresponding to Ahlfors-David regularity).  For example, if $X$ is a compact subset of $\R^d$ which possesses a finite Borel measure $\mu$ for which $r^{-\alpha} \mu(B_x(r))$ is bounded uniformly above and below by nonzero constants for all $x$ and $r$ (here $B_x(r)$ is the usual Euclidean ball of radius $r$ centered at $x$), then
\[ ||f ||_{L^p \log^s L(X)}^p \lesssim \int_{X \times X} |f(x) - f(y)|^p \frac{\log^{s-1}(1 + |x-y|^{-1})}{|x-y|^\alpha} d \mu(x) d \mu(y) \]
provided $\int_{X} f d \mu = 0$ (with $p \in [1,\infty)$ and $s \geq 0$ both fixed).  This result may be obtained using a minor refinement of lemma \ref{logsoblemma}; a sketch of the details appears at the end of section \ref{logsobsec}.


\section{The proof theorem \ref{poincarethm}}

The main tool used in the proof of \eqref{mainest} is a generalization of the method of Lyapunov functions (not unlike the approach used by Bakry, Barthe, Cattiaux, and Guillin \cite{bbcg2008}; Bakry, Cattiaux, and Guillin \cite{bcg2008}; or by Cattiaux, Guillin, Wang, and Wu \cite{cgww2009}).  It is well-known that for any linear mapping $T$ between Hilbert spaces, the largest eigenvalue of $T^* T$ equals the square of the norm of $T$; what may be perhaps less well-known is that when $T$ maps nonnegative functions to nonnegative functions, one may make a similar assertion about $T$ as a mapping on $L^p$ spaces with the role of $T^* T$ played by the nonlinear operator $f \mapsto T^* ( T f)^{p-1}$.  The proof is elementary:
\begin{lemma}
Let $X$ and $Y$ be topological spaces equipped with nonnegative measures $\mu$ and $\nu$, respectively. \label{fancylemma}
Suppose $T$ is a linear operator which maps some vector space of measurable functions on $Y$ to measurable functions on $X$ (in both cases, modulo equivalence almost everywhere).  Assume also that $T$ satisfies the following properties:
\begin{enumerate}
\item The operator $T$ maps nonnegative functions to nonnegative functions.
\item If $f_1,f_2,\ldots$ is a pointwise nondecreasing sequence of nonnegative functions in the domain of $T$ and the function $\sup_n f_n$ is also in the domain of $T$ then $T(\sup_n f_n) \leq \sup_n T f_n$ almost everywhere.
\item There is a nonnegative function $f$, strictly positive and finite $\nu$-a.e., such that $f \chi_{E}$ is in the domain of $T$ for any measurable set $E$ and that
\begin{equation}
\left( \int | T f|^{p-1} | T (f \chi_E)| d \mu \right)^{\frac{1}{p}} \leq C \left( \int f^p \chi_E d \nu \right)^{\frac{1}{p}} .\label{mega}
\end{equation}
holds for some finite constant $C$ (where both sides are allowed in some cases to equal $+\infty$) and some fixed exponent $1 \leq p < \infty$.  The function $f$ will be called a Lyapunov function.
\end{enumerate}
Then $T$ extends uniquely to a linear mapping from $L^p(Y)$ to $L^p(X)$ which satisfies $|| T g ||_p \leq C ||g||_p$ for the  same constant $C$ appearing in \eqref{mega}.
\end{lemma}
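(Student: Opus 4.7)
The plan is to reinterpret hypothesis \eqref{mega} as a linear domination of an auxiliary functional on $Y$, then derive a pointwise generalized H\"older-type inequality for $T$ on simple multiples of the Lyapunov function $f$ as a substitute for the usual kernel Schur test, and finally extend from this class to all of $L^p(Y)$ by monotone approximation using properties (1) and (2).

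The linearity of $T$ ensures that $\Phi(h) := \int |Tf|^{p-1} T(fh)\, d\mu$ is a positive linear functional on nonnegative simple functions $h$ on $Y$, and setting $h = \chi_E$ in \eqref{mega} gives $\Phi(h) \leq C^p \int h f^p \, d\nu$ first for simple $h \geq 0$ and then, by property (2) and monotone convergence, for every measurable $h \geq 0$. The key pointwise step is the inequality
\[ (T(fh))^p \leq (Tf)^{p-1}\, T(fh^p) \quad \mu\text{-a.e.} \]
for any simple $h = \sum_i a_i \chi_{E_i}$ with disjoint $E_i$: expanding $T(fh) = \sum_i a_i T(f \chi_{E_i})$ by linearity and applying the discrete H\"older inequality $(\sum_i u_i v_i)^p \leq (\sum_i u_i^{p'})^{p-1} (\sum_i v_i^p)$ with $u_i := T(f\chi_{E_i})^{1/p'}$ and $v_i := a_i T(f\chi_{E_i})^{1/p}$, the right-hand side collapses to $(T(f \sum_i \chi_{E_i}))^{p-1}\, T(f h^p) \leq (Tf)^{p-1}\, T(f h^p)$ via the linearity and positivity of $T$. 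Integrating this bound and applying the estimate on $\Phi$ with $h$ replaced by $h^p$ yields $\|T(fh)\|_p \leq C\|fh\|_p$ for every simple $h \geq 0$.

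Since $f$ is $\nu$-a.e.\ strictly positive and finite, any nonnegative $g \in L^p(Y)$ is the pointwise increasing limit of a sequence $fh_n$ with $h_n$ simple and nonnegative. The linearity and positivity of $T$ make $\{T(fh_n)\}$ pointwise nondecreasing, and monotone convergence together with the bound just established shows that its supremum lies in $L^p(X)$ with norm at most $C\|g\|_p$. Defining $Tg$ to be this supremum (and extending to real $g$ by linearity) yields the desired bounded linear extension, uniqueness following from the density of $\{fh : h \text{ simple}\}$ in $L^p(Y)$. The principal obstacle is the pointwise H\"older-type inequality: without a kernel representation for $T$ it must be derived purely from linearity on simple functions, and the disjoint-support hypothesis is precisely what makes the discrete H\"older step collapse cleanly through the linearity of $T$.
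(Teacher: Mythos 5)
Your proof is correct and takes a genuinely different route from the paper's. The paper argues variationally: for each finite partition $E_1,\ldots,E_n$ it forms the finite-rank operator $T_n u := \sum_i u_i T(f\chi_{E_i})$, invokes compactness to find an extremal $u$ on the constraint sphere, derives an Euler--Lagrange equation which makes $u$ a fixed vector of the nonlinear, degree-one homogeneous map $S$, and then deduces $\|T_n\| \le C$ by iterating $S$ and comparing $S^N u$ with the bound $S^N \mathbf{1} \le C^{pN/(p-1)} \mathbf{1}$ supplied by \eqref{mega} --- essentially a nonlinear power-iteration argument in the spirit of Perron--Frobenius. Your argument replaces all of this with the single pointwise estimate
\[
\bigl(T(fh)\bigr)^p \;\le\; (Tf)^{p-1}\, T(fh^p) \qquad \mu\text{-a.e.},
\]
valid for nonnegative simple $h$ by expanding $T(fh) = \sum_i a_i T(f\chi_{E_i})$ over a disjoint representation, applying discrete H\"older with exponents $(p',p)$, and then using positivity to replace $T(f\chi_{\cup E_i})$ by $Tf$; integrating and invoking \eqref{mega} linearly on $h^p$ gives $\|T(fh)\|_p \le C\|fh\|_p$ immediately. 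This is shorter, avoids compactness and the Lagrange-multiplier computation entirely, and makes the role of \eqref{mega} as a weighted Schur-test condition transparent; the paper's iteration scheme, on the other hand, is perhaps better adapted to settings where one cannot exhibit a clean pointwise inequality. Two small points worth tightening: the case $p=1$ should be dispatched separately (your H\"older step uses $p' = p/(p-1)$, which degenerates), as the paper also notes; and in the final extension step the agreement of your supremum-defined extension with the original $T$ on the original domain uses hypothesis (2) for the inequality $T(\sup_n fh_n) \le \sup_n T(fh_n)$ and positivity of $T$ for the reverse inequality $T(\sup_n fh_n) \ge T(fh_n)$ --- worth stating explicitly, since hypothesis (2) is otherwise unused in your argument.
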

\begin{proof}
Note that the case $p=1$ is essentially trivial.  For $p > 1$, fix any positive integer $n$ and let $E_1,\ldots,E_n$ be disjoint $\nu$-measurable sets in $Y$  such that $\int_{E_i} |f|^p d \nu < \infty$ for each $i$.  Now define $T_n : \C^n \rightarrow L^p(X)$ by
\[ T_n (u) := \sum_{i=1}^n u_i T (f \chi_{E_i}) \]
for $u := (u_1,\ldots,u_n)$.  By continuity in $u$ and compactness, $|| T_n u ||_{L^p(X)}^p$ achieves a maximum on the unit ball $|| f \sum_{i=1}^n u_i \chi_{E_i}||_{L^p(Y)}^p = 1$.  Moreover, since $T (f \chi_E)$ is nonnegative (almost everywhere), it suffices to assume that each entry of $u_i$ is a nonnegative real number since $|T_n u| \leq T_n |u|$.  Now at this extreme point $u$, the Euclidean gradient (with respect to $u$) of the quantity $|| T_n u ||_{L^p(X)}^p$ must be proportional to the Euclidean $u$-gradient of $|| f \sum_{i=1}^n u_i \chi_{E_i}||_{L^p(Y)}^p$ 
(for if not, there would be a direction tangent to the unit sphere $|| f \sum_{i=1}^n u_i \chi_{E_i}||_{L^p(Y)}^p = 1$ in which $|| T_n u ||_{L^p(X)}^p$ increases).  Differentiating with respect to the coordinate $u_i$ gives that
\begin{equation} \int \left| \sum_{j=1}^n u_j T ( f \chi_{E_i}) \right|^{p-1} T (f \chi_{E_i}) d \mu = \lambda u_i^{p-1} \int_{E_i} |f|^p d \nu \label{mid}
\end{equation}
for each $i$ and some fixed constant $\lambda$.  Note in particular that the interchange of differentiation and integration is permitted by the Lebesgue dominated convergence theorem since \eqref{mega} and the nonnegativity of $T$ imply that $T \chi_{E_i} \in L^p(X)$ for each $i$ because each $E_i$ was chosen specifically to make the right-hand side of \eqref{mega} finite (and likewise for the differentiation taking place on the right-hand side of \eqref{mid}).  In fact, multiplying \eqref{mid} by $u_i$ and summing over $i$ gives that $\lambda$ must exactly equal the maximum value of $||T_n u||^p$ which we will call $||T_n||^p$.  Now consider the mapping $S$ on nonnegative $n$-tuples given by
\[ (S v)_i := \left(\left( \int_{E_i} |f|^p d \nu \right)^{-1} \int \left| \sum_{j=1}^n v_j T ( f \chi_{E_i}) \right|^{p-1} T (f \chi_{E_i}) d \mu \right)^{\frac{1}{p-1}}. \]
Without loss of generality one may assume that $\int_{E_i} |f|^p d \nu > 0$ for each $i$ since \eqref{mega} and nonnegativity would imply in the case of equality with zero that $T f \chi_{E_i} = 0$ $\mu$-almost everywhere.
By construction $S u = ||T_n||^{p/(p-1)} u$ for the extreme point $u$.  This mapping $S$ is homogeneous of degree $1$, so clearly $S^N u = ||T_n||^{pN/(p-1)} u$ for any positive integer $N$.  Moreover by \eqref{mega} and nonnegativity, one also has that $S^N {\mathbf 1} \leq C^{pN/(p-1)} {\mathbf 1}$, where ${\mathbf 1}$ is the vector in $\R^n$ with all entries equal to $1$.  Thus for any $i$ one has that
\[ ||T_n||^{\frac{pN}{p-1}} u_i = (S^N u)_i \leq (\sup_{j} u_j) (S^N {\mathbf 1})_i \leq (\sup_{j} u_j) C^{\frac{pN}{p-1}} {\mathbf 1}. \]
As both sides are finite, choosing $i$ so that $u_i \neq 0$, taking $N$-th roots and letting $N \rightarrow \infty$ yields the inequality $||T_n|| \leq C$.
Consequently, for any nonnegative simple function $g$ with $\int |f|^p g d \nu < \infty$ it must be the case that $||T (gf)||_{L^p(X)} \leq C || g f ||_{L^p(Y)}$ for the same constant $C$ appearing in \eqref{mega}.  But functions of the form $g f$ with $g$ simple are dense in $L^p(Y)$ since $f$ is positive and finite almost everywhere.  In fact, a general nonnegative function $h$ may be approximated by taking $g_1,g_2,\ldots$ to be a sequence of simple functions converging monotonically to $h/f$ everywhere.  Then $f g$ converges to $h$ almost everywhere, and by the constraint $T \sup_{n} g_n f \leq \sup T (g_n f)$, $L^p$ boundedness follows (the supremum inequality in particular ensures that when the domain of $T$ is extended by completion, it agrees with the existing definition of $T$).
\end{proof}

Supposing that the pair $(W,W_+)$  is admissible, let $E \subset X \times X$ consist of all pairs $(x,y)$ for which $x \not \in X_0$, $y \in B_x^*$,  and $W(y) \geq \lambda W_+(x)$.  Now let 
\[ P(x,y) :=  \chi_E(x,y) \left[ W(y) \right]^s \left( \int_{B^*_x} \chi_E(x,z) \left[ W(z) \right]^s d \mu(z) \right)^{-1}.  \]
The relevant properties of $P$ to record at this point are 
\begin{align} \int P(x,y) d \mu(y) & = \chi_{X_0^c}(x), \label{stoceq}  \\
P(x,y) & \lesssim ( \mu(B_y) )^{-1} \chi_E(x,y) \left[ \frac{ W(y)}{ W_+(x)} \right]^s \label{linfty} \end{align}
for all $x,y \in X$ (the former is a trivial consequence of Fubini and the latter is precisely the content of \eqref{connect}).
Now consider the operators given by
\begin{align*}
 T f(x) & :=  \int P(x,y) f(y) d \mu (y), \\
 S_1 g(x) & := \int P(x,y) g(x,y) d \mu(y) + \frac{\chi_{X_0}(x)}{\mu(X_0)} \int_{X_0} g(x,y) d \mu(y), \\
S_n g & :=  S_1 g + T S_{n-1} g,
\end{align*}
for nonnegative measurable functions $f$ and $g$ on $X$ and $X \times X$, respectively, and all integers $n \geq 2$.  The action of  $S_n$ from $L^p(X \times X)$ to $L^p(X)$ will be studied by means of lemma \ref{fancylemma} with the Lyapunov function being given by
\[ F(x,y) := \begin{cases} \left[ W_+(x) \right]^{-\delta} - \left[ W_+(y) \right]^{-\delta} & x \not \in X_0 \\
 \left[ W_+(x) \right]^{-\delta} & x \in X_0 
\end{cases}
\]
for some small positive $\delta$ to be chosen momentarily.  Since $W_+(y) \geq W(y) \geq \lambda W_+(x)$ on the support of $E$, the function $F(x,y)$ is finite and positive everywhere, and one has the series of inequalities
\[ \left[W_+(x) \right]^{-\delta} \geq F(x,y) \geq (1 - \lambda^{-\delta}) \left[W_+(x) \right]^{-\delta} \]
for $(x,y) \in E$.  Moreover, the identity
\begin{align*} S_1 F(x) & = \chi_{X_0^c}(x) \left[ W_+(x) \right]^{-\delta} - T (W_+^{-\delta})(x) + \chi_{X_0}(x) \left[ W_+(x) \right]^{-\delta} \\ & = \left[ W_+(x) \right]^{-\delta} - T(W_+^{-\delta})(x) 
\end{align*}
(verified directly from the definition of $S_1$) gives by induction that $S_n F(x) = \left[ W_+(x) \right]^{-\delta}  - T^n (W_+^{-\delta})(x)$ for all $n$.  In particular, \begin{equation} S_n F(x) \leq \left[ W_+(x) \right]^{-\delta} \label{lyapunov1} \end{equation} for all $n$ and all $x \in X$.

Next, a closer look at $T$ is in order.  The inequalities \eqref{stoceq} and \eqref{linfty} establish the estimates
\begin{align*}
|T f(x)| & \leq \mathop{\mathrm{ess.sup}}_{y \in X} |f(y)| \chi_{E}(x,y), \\
|T f(x)| & \lesssim \int \chi_{E}(x,y) (\mu(B_y))^{-1} |f(y)|  \left[ \frac{ W(y)}{ W_+(x)} \right]^s d \mu(y), 
\end{align*}
both valid for every $x \in X$ (note that the second estimate will play the role of hypercontractivity).  Consequently, for any $n \geq 1$ we have that
\[ |T^n f(y_0)| \lesssim \sup_{y_1,\ldots,y_{n-1}} \int \left(\prod_{j=1}^n \chi_E(y_{j-1},y_j) \right) \frac{ |f(y_n)|}{\mu(B_{y_n})}  \left[ \frac{ W(y_n)}{ W_+(y_{n-1})} \right]^s d \mu(y_n). \]
Note that all pairs $(y_n,y_{n-1}),\ldots,(y_1,y_0)$ belong to $U$ by virtue of the definition of $E$, so the product of indicator functions is bounded above by $\chi_{B^n_{y_n}}(y_0)$.
By the admissibility of $(W,W_+)$, it is also true that $W_+(y_{n-1}) \geq \lambda^{n-1} W_+(y_0)$ on the support of the product of indicator functions. In particular, then, 
\[ \sum_{n=1}^\infty |T^n f(x)| \lesssim \int \sum_{n=1}^\infty \chi_{B_y^n}(x) \left[ \lambda^{-n} \frac{W_+(y)}{W_+(x)} \right]^{s'} \frac{|f(y)|}{\mu(B_y)} d \mu(y) \]
provided $s' \geq s$.  Now choose $\delta$ sufficiently small so that $s' := 1 - \delta(p-1) \geq s$.  Multiplying both sides by $\left[ W_+(x) \right]^{1-\delta(p-1)}$ and integrating with respect to $x$ gives that
\begin{align*}
 \int  & \sum_{n=1}^\infty |T^n f(x)|  \left[ W_+(x)\right]^{1- \delta(p-1)} d \mu(x) \\
& \lesssim  \int \sum_{n=1}^\infty \frac{\mu(B^n_y)}{\mu(B_y)} \lambda^{-n(1- \delta(p-1))} |f(y)|   \left[ W_+(y)\right]^{1- \delta(p-1)} d \mu(y) \\
& \lesssim \sum_{n=1}^\infty \lambda_0^n \lambda^{-n(1-\delta(p-1))} \int  |f(y)| \left[ W_+(y)\right]^{1- \delta(p-1)}  d \mu(y)
\end{align*}
by \eqref{subexp}, assuming that $\delta$ is also chosen small enough that $\lambda^{1-\delta(p-1)} > \lambda_0$.
Combining this with \eqref{lyapunov1} and the observation that $S_n g = S_1 g + T S_1 g + \cdots + T^{n-1} S_1 g$ gives that
\begin{align*}
\int |S_n & F|^{p-1}(x) S_n(F \chi_G)(x) W_+(x) d \mu(x) \\ & \lesssim \int  S_1 (F \chi_G)(y) \left[ W_+(y) \right]^{1-\delta(p-1)} d \mu(y) \\
& \lesssim \int  \frac{\chi_{U}(z,y)}{\mu(B_z)} \chi_G(y,z) \left[ F(y,z) \right]^p W(z) d \mu(y) d \mu(z) \\
& \hspace{20pt} +  \frac{1}{\mu(X_0)}  \int_{X_0 \times X_0}  \chi_G(y,z) \left[F(y,z) \right]^p W_+(y) d \mu(y) d \mu(z)
\end{align*}
(since, in particular, $\left[ W_+(y) \right] \left[ W(z) / W_+(y) \right]^s \lesssim W(z)$ for $(y,z) \in E$ when $s < 1$).
By the assumption that $W_+(y) \mu(B_z) \leq C W(z) \mu(X_0)$, it follows that
\begin{align*}
\int |S_n F|^{p-1}(x) & S_n(F \chi_G)(x) W_+(x)d \mu(x)  \\ & \lesssim \int    \frac{\chi_{U}(z,y)}{\mu(B_z)} F^{p}(y,z) \chi_G(y,z) W(z) d \mu(y) d \mu(z)
\end{align*}
holds uniformly for all $G$ and all $n$.  For each $n$ we may thus apply lemma \ref{fancylemma} since $S_n$ is {\it a priori} defined on all nonnegative measurable functions (and the monotonicity requirement of the lemma follows immediately from the Lebesgue monotone convergence theorem).  Thus we have that
\[ \int |S_n g(x)|^p W_+(x) d \mu(x) \lesssim \int_X \int_{B_z} \frac{ |g(y,z)|^p}{\mu(B_z)} d \mu(y) W(z) d\mu(z) \]
for all fixed $p \in [1,\infty)$, uniformly in $g$ and in $n$.

Now suppose that $f$ is any measurable function on $X$ which goes to zero uniformly as $W_{+} \rightarrow \infty$ and satisfies $\int_{X_0} f d \mu = 0$.  Let $\Delta_f(x,y) := |f(x)-f(y)|$ and consider the action of the $S_i$'s on this function $\Delta_f$.  In the case of $S_1$, we have
\begin{align*}
 S_1 \Delta_f(x) &  = \int P(x,y) |f(x) - f(y)| d \mu(y) \\
 & \hspace{20pt} + \frac{1}{\mu(X_0)} \chi_{X_0}(x) \int_{X_0} |f(x) - f(y)| d \mu(y) \\
& \geq \left| f(x) - T f(x) \right| \chi_{X_0^c}(x) + \left| f(x) - \frac{1}{\mu(X_0)} \int_{X_0} f(y) d \mu(y) \right| \chi_{X_0}(x) \\
& \geq |f(x) - T f(x)|
\end{align*}
since $T f(x) = 0 = \int_{X_0} f d \mu$ when $x \in X_0$.
It is trivial, then, to show by induction that
\[ S_n \Delta_f(x) \geq |f(x) - T^n f(x)| \]
since
\begin{align*}
S_n\Delta_f(x)  & \geq |f(x) - T f(x)| + \int P(x,y) |f(y) -  T^{n-1} f(y)| d \nu(y) \\
 & \geq |f(x) - {T} f(x)| + \left| T f(x) - {T}^{n} f(x) \right|  \geq |f(x) - {T}^n f(x)|. 
\end{align*}
We note, however, that by virtue of Fubini, 
\[ T^n f(x) = \int P^n(x,y) f(y) d\mu(y) \]
for some function $P^n$ with $\int P^n(x,y) d \mu(y) \leq 1$.  Moreover, for fixed $x$, the support of $P^n(x,\cdot)$ is necessarily contained in the union of $X_0$ and the set of points $y$ where $W_{+}(y) \geq \lambda^n W_+(x)$.  In particular,
\[ |T^n f(x)| \leq T^{n-1} (\chi_{W_+(\cdot) \geq \lambda^{n-1}W_+(x)} | T f|)(x) + T^{n-1}( \chi_{X_0} |T f|)(x). \]
However, since $T f(x) = 0$ on $X_0$, it simply follows that
\[ |T^n f(x)| \leq \sup_y |f(y)| \chi_{W_+(y) \geq \lambda^{n-1}W_+(x)} \]
which goes to zero as $n \rightarrow \infty$ for each $x$ by assumption on $f$ (outside of a set of $W_+ d \mu$ measure zero where $W_+(x) = 0$).  Consequently, by Fatou's lemma and the boundedness of the $S_n$'s, it follows that
\begin{align*}
 \int |f(x)|^p W_+(x) d \mu(x) & \leq \liminf_{n \rightarrow \infty} \int |S_n \Delta_f(x)|^{p} W_+(x) d \mu(x) \\ & \lesssim \int_X \int_{B_z} \frac{ |\Delta_f(y,z)|^p}{\mu(B_z)} d \mu(y) W(z) d\mu(z) 
 \end{align*}
 which is exactly the asserted conclusion of the main theorem.

\section{The proof of corollary \ref{bigcor}}
This final section begins with an explanation of the connection between the condition \eqref{connectalt} and \eqref{differential}.  This connection is provided by lemma \ref{diflem} below after one makes the observation that when $W=e^{-V}$,
\[ \frac{\Delta [W (x)]^s}{[W(x)]^s} = s \left( s |\nabla V(x)|^2 - \Delta V(x) \right). \]
\begin{lemma}
Suppose $F \in C^2(\R^d)$ is nonnegative and satisfies the inequality \label{diflem}
\begin{equation} \Delta F(x) \geq \rho F(x) \label{subsol} \end{equation}
for some $\rho > 0$ at every point of the ball $B_x^r := \set{y}{|x-y| < r}$.  Then
\begin{equation} \frac{1}{m(B_x^t)} \int_{B_x^t} F(y) dy \geq \left( 1 + \frac{\rho t^2}{2 (d+2)} \right) F(x) \label{conseq}
\end{equation}
for any $0 < t \leq r$, where $m$ and $dy$ refer to Lebesgue measure.  Consequently, if \eqref{subsol} holds at every point in $|x| > R$ and $F \in L^1(\R^d)$ then there is a universal constant $c$ for which $F(x) e^{c |x| \sqrt{\rho /d} } \in L^\infty (\R^d)$.
\end{lemma}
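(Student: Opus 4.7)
The plan is a classical spherical-mean computation for the bound \eqref{conseq}, followed by an iteration of the resulting sub-mean-value property to obtain the exponential $L^\infty$ decay.

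For \eqref{conseq}, introduce the spherical mean $\phi(s) := |S^{d-1}|^{-1} \int_{|\xi|=1} F(x + s\xi)\, d\sigma(\xi)$ about a fixed $x$. The divergence theorem applied to $\nabla F$ on $B_x^s$ yields the standard identity
$$\phi'(s) = \frac{1}{|S^{d-1}| s^{d-1}} \int_{B_x^s} \Delta F(y)\, dy.$$
Since $\Delta F \geq \rho F \geq 0$ on $B_x^r$, we have $\phi' \geq 0$ and so $\phi(s) \geq \phi(0) = F(x)$ for all $s \leq r$. Substituting this crude lower bound back into the integrand gives $\phi'(s) \geq (\rho/d) F(x) s$, and integration yields $\phi(s) \geq F(x)(1 + \rho s^2/(2d))$. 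Converting the spherical mean to a volume average via $\frac{1}{m(B_x^t)} \int_{B_x^t} F = (d/t^d) \int_0^t s^{d-1} \phi(s)\, ds$ and evaluating the elementary polynomial integral produces exactly the constant $\rho t^2/(2(d+2))$ appearing in \eqref{conseq}.

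For the $L^\infty$ consequence, rewrite \eqref{conseq} as the pointwise contractive inequality $F(x) \leq (1 + ct^2)^{-1} A_t F(x)$, where $c := \rho/(2(d+2))$ and $A_t F(x) := m(B_x^t)^{-1} \int_{B_x^t} F(y)\, dy$ is the convolution of $F$ against the normalized indicator of the ball of radius $t$. Iterating this pointwise inequality $n$ times gives $F(x) \leq (1 + ct^2)^{-n} A_t^n F(x)$, and Young's inequality applied to the $n$-fold self-convolution $\chi_{B_0^t}^{\star n}$ (which has $L^\infty$ norm at most $m(B_0^t)^{n-1}$) yields $A_t^n F(x) \leq m(B_0^t)^{-1} \|F\|_{L^1(\R^d)}$. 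Each iteration at an intermediate point $y$ requires \eqref{subsol} on $B_y^t$, i.e., $|y| > R + t$; tracking this constraint through $n$ steps, with intermediate centers ranging within $B_x^{(n-1)t}$, yields the legitimacy condition $|x| > R + nt$.

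Finally, optimize $t$ and $n$: choose $t := \beta\sqrt{d/\rho}$ for a universal $\beta$ so that $ct^2 = \beta^2/(2(d+2))$ equals some fixed positive constant $u^* > 0$, and take $n := \lfloor (|x|-R)/t \rfloor \asymp \sqrt{\rho/d}\,(|x|-R)$. Then $(1 + ct^2)^n \geq \exp(\eta \sqrt{\rho/d}\,(|x|-R))$ for a universal $\eta > 0$, so $F(x) \leq C_{d,\rho}\|F\|_{L^1} e^{-\eta\sqrt{\rho/d}(|x|-R)}$ and hence $F(x)e^{\eta\sqrt{\rho/d}|x|} \in L^\infty(\R^d)$, which is the claim with $c := \eta$. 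The main technical subtlety is the bookkeeping to ensure that each iteration step invokes \eqref{subsol} at an admissible point; this forces the balance $nt \lesssim |x| - R$, and optimizing $(1+ct^2)^n$ against this constraint is precisely what produces the scaling $\sqrt{\rho/d}$ in the exponent rather than $\sqrt{\rho}$ or $\rho/d$.
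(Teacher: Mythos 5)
Your derivation of \eqref{conseq} is essentially the same as the paper's: differentiate the spherical mean $\phi(s)$, use the divergence theorem and $\Delta F \geq \rho F \geq 0$ to get monotonicity, feed the crude bound $\phi(s) \geq \phi(0)$ back in to get $\phi'(s) \geq (\rho/d) F(x) s$, integrate twice, and convert to a volume average. For the exponential $L^\infty$ decay, however, you take a genuinely different route. The paper first gets $F \in L^\infty$ by taking $t=1$ and $F \in L^1$, then iterates the pointwise bound $F(x) \leq \frac{2}{3}\sup_{B_x^t} F$ along a chain of near-maximizers, which must terminate after roughly $(|x|-R)/t$ steps because the intermediate centers have to stay in the region where \eqref{subsol} holds (the paper's phrasing of this chain argument in terms of suprema over $|x|\leq T$ is somewhat garbled, but that is clearly the idea). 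You instead observe that \eqref{conseq} says $F \leq (1+ct^2)^{-1} A_t F$ pointwise where legitimate, iterate the linear averaging operator $A_t$ directly, and close the estimate with a single application of Young's inequality to the $n$-fold self-convolution of the normalized ball indicator, giving $A_t^n F \leq m(B_0^t)^{-1}\|F\|_{L^1}$. This is cleaner: it avoids tracking a discrete chain of points, bundles the legitimacy bookkeeping into the single inclusion of intermediate centers in $B_x^{(n-1)t}$, and dispenses with the separate preliminary step of establishing $F\in L^\infty$ (continuity of the $C^2$ function $F$ handles the compact region $|x|\leq R+t$, and the Young bound handles the rest). Both approaches use $t \asymp \sqrt{d/\rho}$, $n \asymp \sqrt{\rho/d}\,(|x|-R)$, and yield the same exponent. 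One small imprecision worth noting: with $t = \beta\sqrt{d/\rho}$ one has $ct^2 = \beta^2 d/(2(d+2))$, which is not literally a dimension-independent constant, but since it lies in $[\beta^2/6,\beta^2/2)$ for all $d\geq 1$, it is uniformly bounded away from zero for a fixed universal $\beta$, which is all the argument requires.
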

\begin{proof}
Without loss of generality, the point $x$ may be taken to reside at the origin in $\R^d$.
The proof follows the same lines as the usual proof for the mean value property of harmonic functions.  For any $t \in (0,r)$, let $\phi(t)$ be given by
\[ \phi(t) := \int_{{\mathbb S}^{d-1}} F(t \sigma) d \sigma \]
where $d \sigma$ is the uniform measure on the unit sphere in $\R^d$ with the normalization induced by Lebesgue measure.  On $(0,r)$, the function $\phi$ is clearly twice differentiable.  Differentiating with respect to $t$ and applying the divergence theorem and \eqref{subsol} gives
\begin{align*}
\frac{d \phi}{dt}(t)  & = \int_{{\mathbb S}^{d-1}} \sigma \cdot (\nabla F)(t \sigma) d \sigma = t \int_{B_0^1} (\Delta F)(t x) dx \geq  \rho t \int_{B_0^1} F(tx) dx. 
\end{align*}
In particular, the derivative of $\phi$ is nonnegative, so $\phi(t) \geq \phi(0)$ when $t \in [0,r]$.  Expressing the integral of $F(tx)$ in polar coordinates, it follows that
\[ \frac{d \phi}{dt}(t) \geq \rho t \int_0^1 u^{d-1} \phi(t u) du \geq \frac{\rho \phi(0) t}{d} \]
and the fundamental theorem of calculus then asserts that
\[ \phi(t) = \phi(0) + \int_0^t \frac{d \phi}{dt}(t) dt \geq \left( 1 + \frac{\rho t^2}{2d} \right) \phi(0). \]
Reversing the polar coordinates gives
\[ \int_{B_0^1} F(t x) dx = \int_0^1 u^{d-1} \phi(tu) du \geq \frac{\phi(0)}{d} + \frac{\phi(0) \rho t^2}{2 d(d+2)}. \]
Lastly, \eqref{conseq} follows from the observation that $d^{-1} \phi(0) = m(B_0^1) F(0)$.
As far as decay of $F$ is concerned, note that the left-hand side is uniformly bounded when $t=1$ for all $x$ in $|x| > R+1$ when $F \in L^1(\R^d)$.  Consequently $F \in L^\infty$.  The exponential decay follows by fixing $t := (\rho^{-1}(d+2))^{-1/2}$ and noting that \eqref{conseq} also implies that the supremum of $F(x)$ on $|x| \leq T$ is bounded above by $\frac{3}{2}$ times the supremum of $F(x)$ on $|x| \leq T - t$ whenever $T - t > R$.
\end{proof}
Note that, just like the mean value property, if $F$ is $C^2$, nonnegative and \eqref{conseq} holds for all $t$ sufficiently small, then the constraint \eqref{subsol} must hold at the point $x$.  However, since \eqref{connect} or \eqref{connectalt} need only be satisfied on unit balls, they allow for more general sorts of weights as well.

Now we come to the proof of corollary \ref{bigcor} itself.
Suppose $\Delta [W(x)]^s \geq \rho [W(x)]^s$ for every $x$ outside the ball of radius $R$ centered at the origin and some $\rho$ (this is precisely the content of the hypothesis \eqref{differential}, though the exact values of $\rho$ in both cases differ by a negligible factor of $s$).  Inside the ball $|x| \leq R+1$ suppose as well that $a \leq |W(x)| \leq A$.  Consider the modified weights
\[ \widetilde{W}(x) := \begin{cases} A e^{-3 \lambda (|x| - R - 1)} & |x| \leq R+1 \\ W(x) & |x| > R+1 \end{cases} \]
\[ \widetilde{W}_+(x) := \begin{cases} A e^{-3 \lambda (|x| - R - 1)} & |x| \leq R+1 \\ \left( \frac{2(d+2) + \eta}{2(d+2) + \rho} \frac{1}{m(B_x^1)} \int_{B_x^1} [W(y)]^s dy \right)^{1/s} & |x| > R+1 \end{cases} \]
where $\eta$ is a small, positive constant.  Clearly we have that $\widetilde{W}(x) \leq \widetilde{W}_+(x)$ (since outside the ball of radius $R+1$, the inequality follows from \eqref{conseq} for any $\rho \geq 0$).  This pair $(\widetilde W, \widetilde{W}_+)$ is admissible with respect to the Lebesgue measure and the Euclidean family of balls on $\R^d$.  When $\frac{1}{2} \leq |x| \leq R+1$, the inequality \eqref{connect} may be verified directly for the exponentially decaying weight, and when $|x| > R+1$, we have
\[ \frac{1}{m(B_x^1)} \int_{B_x^1} [W(y)]^s dy \leq \frac{1}{m(B_x^1)} \int_{B_x^1} [\widetilde W(y)]^s dy \]
which implies \eqref{connectalt} for the ball $B_x^1$ as long as $\eta < \rho$, $\lambda$ is sufficiently near $1$ and $\epsilon$ is sufficiently small.  Corollary \ref{bigcor} now follows by applying \eqref{mainest} from the main theorem to the pair $(\widetilde{W},\widetilde{W}_+)$ and noting that $\widetilde{W} \approx W$ and $\widetilde{W}_+ \approx W_+$ uniformly over all $x \in \R^d$.  
As already remarked, the condition $\int_{X_0} f = 0$ may be replaced by $\int f W_{+} = 0$ at no cost.  Also note that the decay condition on $f$ imposed by the main theorem is vacuous in this case since $W$ is necessarily in $L^\infty$. 



\section{Logarithmic Sobolev inequalities}
In this section, we present a self-contained proof of theorem \ref{logsobthm}.  The proof is elementary and short, reducing for the most part to the following lemma. \label{logsobsec}
\begin{lemma}
For any nonnegative, nondecreasing functions $\psi,u$ on $[0,\infty)$.  Suppose $\{a_j\}_{j \geq 0}$ is a convergent complex sequence with limit $L$.  For any $\theta \in (0,1)$, \label{logsoblemma}
\begin{equation} \psi(|L|) u(|L|) \leq \psi \left( \frac{|a_0|}{1-\theta} \right) u(|L|) + \sup_{k \geq 1} \ \psi \left( \frac{|a_{k}|}{1-\theta} \right)   u \!\left( \frac{|a_{k-1}-L|}{\theta} \right). \label{super1} \end{equation}
\end{lemma}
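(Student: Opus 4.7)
The plan is to use a straightforward dichotomy based on comparing $|a_0|$ with $(1-\theta)|L|$, together with the convergence $a_j \to L$. Since $\psi$ and $u$ are nondecreasing, all the work consists of exhibiting, in each case, an index $k$ so that one of the two terms on the right-hand side already dominates $\psi(|L|) u(|L|)$.

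First I would dispose of the case $|a_0| \geq (1-\theta)|L|$ (which includes the trivial case $L=0$). Here $|a_0|/(1-\theta) \geq |L|$, so monotonicity of $\psi$ gives
\[ \psi\!\left(\frac{|a_0|}{1-\theta}\right) u(|L|) \geq \psi(|L|) u(|L|), \]
and the first term of the right-hand side of \eqref{super1} alone suffices.

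Next I would handle the remaining case $|a_0| < (1-\theta)|L|$ (so in particular $L\neq 0$). Since $a_j \to L$, for all sufficiently large $j$ one has $|a_j| \geq (1-\theta)|L|$, so there is a smallest index $k \geq 1$ with $|a_k| \geq (1-\theta)|L|$. By minimality, $|a_{k-1}| < (1-\theta)|L|$, hence by the triangle inequality
\[ |a_{k-1} - L| \geq |L| - |a_{k-1}| > \theta |L|, \]
so that $|a_{k-1} - L|/\theta > |L|$. Invoking the monotonicity of $\psi$ and $u$ then yields
\[ \psi\!\left(\frac{|a_k|}{1-\theta}\right) u\!\left(\frac{|a_{k-1}-L|}{\theta}\right) \geq \psi(|L|) u(|L|), \]
and this single term in the supremum over $k \geq 1$ dominates $\psi(|L|) u(|L|)$.

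Since one of these two cases always applies and in each case one of the two summands on the right-hand side of \eqref{super1} bounds $\psi(|L|) u(|L|)$, the lemma follows. There is no real obstacle here; the only subtlety is remembering to extract $k$ as the first index at which $|a_k|$ enters the region $[(1-\theta)|L|,\infty)$, which automatically forces both the jump estimate $|a_{k-1}-L| > \theta |L|$ and the size estimate $|a_k|/(1-\theta) \geq |L|$ that the right-hand side needs.
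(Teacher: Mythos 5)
Your proof is correct and follows essentially the same argument as the paper's: dispose of the easy case via the first term, and otherwise pick the first index $k$ at which the sequence crosses a threshold, deducing the two needed inequalities from minimality plus one triangle inequality. The only cosmetic difference is that you track the first $k$ with $|a_k| \geq (1-\theta)|L|$ while the paper tracks the first $k$ with $|a_k - L| \leq \theta|L|$; these are dual choices and either one works.
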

\begin{proof}
Suppose first that $|a_0 - L| \leq \theta |L|$.  This implies by the triangle inequality that $|L| \leq |a_0| + |a_0 - L| \leq |a_0| + \theta |L|$, so in this case we have
\begin{equation} \psi(|L|) u(|L|) \leq \psi \left( \frac{|a_0|}{1-\theta} \right) u(|L|). \label{triv} \end{equation}
Now suppose instead that $|a_0 - L| > \theta |L|$.  Let $k$ be the smallest index greater than zero for which $|a_k - L| \leq \theta |L|$ (since $a_k \rightarrow L$, such an index must exist unless $L = 0$, in which case \eqref{triv} holds).  For this specific index $k$, it must be the case that $|a_{k-1} - L| > \theta |L|$ and $(1-\theta) |L| \leq |a_{k}|$, 
which together give that
\begin{equation} \psi(|L|) u(|L|) \leq \psi \left( \frac{|a_{k}|}{1-\theta} \right) u \left( \theta^{-1} |a_{k-1}-L| \right). \label{sidenote}
\end{equation}
Clearly this establishes \eqref{super1}.
\end{proof}

To establish \eqref{logsob} consider $y$ to be temporarily fixed and let
\[ a_j := c \frac{\int_{U_j^*(y)} f(z) W(z) d \mu(z)}{\int_{U_j^*(y)} W(z) d \mu(z)} \left[ W(y) \right]^{\frac{1}{p}} ||f||_{p,\psi}^{-1} \]
for some constant $c$ to be chosen momentarily (and $||f||_{p,\psi}$ given by \eqref{singnorm}).
Jensen's inequality coupled with \eqref{subsol0} yield that
\[ |a_j| \lesssim c \left[\mu(U_j^*(y))\right]^{-\frac{1}{p}} \left( \int |f|^p W d \mu \right)^{\frac{1}{p}} ||f||_{p,\psi}^{-1}; \]
however \eqref{sobcond1} implicitly requires $W_+ \gtrsim W$ (since $\psi$ is increasing and strictly positive), so by \eqref{themajortheorem} we have
\begin{align*}
 \int |f|^p W & d \mu  \lesssim \int |f|^p W_+ d \mu \\
 & \lesssim \int_X   \left[ \frac{1}{\mu(U_0(x))}  \int_{U_0(x)} |f(x) - f(y)|^p d \mu(y)  \right] W(x) d \mu(x) \\
& \lesssim \int_{X} \left( \int_{U_0(x)} |f(x) - f(y)|^p K_{p,\psi}(x,y)  d \mu(y) \right) W(x) d \mu(x) = ||f||_{p,\psi}^p
\end{align*}
since $\mu(U_0(x)) \leq K_{p,\psi}(x,y)$.  It follows then that $|a_j| \lesssim c \left[\mu(U_j^*(y))\right]^{-\frac{1}{p}}$ uniformly in $y$ and $f$.  In particular, let $c$ be chosen sufficiently small that $|a_j| \leq \left[ \mu(U_j^*(y))\right]^{-\frac{1}{p}}$.

Now define $F(y) := c f(y) \left[ W(y) \right]^{\frac{1}{p}} ||f||_{p,\psi}^{-1}$.  We wish to apply lemma \ref{logsoblemma} with $u(t) := t^p$ (and $\theta = \frac{1}{2}$, say) to conclude that
\begin{equation}
\begin{split}
\psi( |F(y)|) \left| F(y) \right|^p   \leq  \psi & \left( \left[\mu(U_0^*(y))\right]^{-\frac{1}{p}} \right) \left| F(y) \right|^p \\ & + \sup_{j \geq 0}  \psi \left( \left[\mu(U_{j+1}^*(y))\right]^{-\frac{1}{p}} \right) \left| a_j - F(y)\right|^p. \end{split} \label{super2}
\end{equation}
The application is fairly immediate, thanks to the monotonicity of $\psi$ and the inequality $|a_j| \leq [ \mu(U_j^*(y))]^{-1/p}$, as long as it is known that $a_j \rightarrow F(y)$ as $j \rightarrow \infty$.   If $\vols(y,y) = 0$, then the convergence may be {\it assumed}, since the right-hand side of \eqref{super2} will be infinite when convergence does not hold (thanks to the fact that $\mu(U_j^*(y)) \rightarrow 0$ and $\psi$ tends to infinity at infinity).  In the remaining case, $\vols(y,y) > 0$, Lebesgue dominated convergence together with the fact that $\cap_j U_j^*(y) = \{y\}$ dictates that $a_j \rightarrow c f(y) \left[ W(y) \right]^{\frac{1}{p}} ||f||_{p,\psi}^{-1}$ as $j \rightarrow \infty$ (in particular, the limit is finite since the singleton set $\{y\}$ has $\mu$-positive measure in this case and $f$ belongs to $L^p$ with the weight $W$).  Thus \eqref{super2} must always hold. 

Now Jensen's inequality applied to $|a_j - F(y)|^p$ gives that
\begin{align*} |a_j - F(y)|^p & \leq \frac{c^p W(y)}{||f||_{p,\psi}^p} \frac{\int_{U_j^*(y)} |f(y) - f(z)|^p W(z) d \mu(z)}{\int_{U_j^*(y)} W(z) d \mu(z)}  \\
& \lesssim \frac{c^p}{||f||_{p,\psi}^p} \left( \mu(U_j^*(y)) \right)^{-1} \int_{U_j^*(y)} |f(y) - f(z)|^p W(z) d \mu(z) \\
& \lesssim \frac{c^p}{||f||_{p,\psi}^p} \int_{U_j^*(y)} |f(y) - f(z)|^p \frac{W(z) d \mu(z)}{\vols(z,y)}.
\end{align*}
Now for any fixed $j$, then, the monotonicity of $\psi$ and the containment relations of the $U_j^*$'s dictate that 
\begin{equation}
\begin{split} 
\psi  \left( \left[\mu(U_{j+1}^*(y))\right]^{-\frac{1}{p}} \right) & \left| a_j - F(y)\right|^p \\  \lesssim \frac{c^p}{||f||_{p,\psi}^p}  \int_{U_j^*(y)} & |f(y) - f(z)|^p \psi \left( \left[ \vols(z,y) \right]^{-\frac{1}{p}} \right) \frac{W(z) d \mu(z)}{\vols(z,y)}. 
\end{split} \label{sidenote2}
\end{equation}
Thus the integral over $y$ of this supremum term on the right-hand side of \eqref{super2} is at most a uniform constant times $c^p$.
On the other hand, 
\[ \psi \left( \left[\mu(U_0^*(y))\right]^{-\frac{1}{p}} \right) \left| F(y) \right|^p \lesssim \frac{W_+(y)}{W(y)} c^p |f(y)|^p W(y) ||f||_{p,\psi}^{-p} \]
(the introduction of $W_+/W$ being possible by \eqref{sobcond1}).  Exploiting \eqref{themajortheorem} again, the integral of the right-hand side of \eqref{super2} is at most a uniform constant times $c^p$.  Thus
\[ \int_X \psi( |F(y)|) \left| F(y) \right|^p d \mu(y) \lesssim c^p. \]
Finally, note that
\[ \psi( \left[ W(y) \right]^{-p} |F(y)|) \lesssim \psi(|F(y)|) + \tilde \psi( \left[ W(y) \right]^{-\frac{1}{p}} ), \]
so that we may conclude 
\begin{align*}
 \int_X \psi \left( \frac{c |f(y)|}{||f||_{p,\psi}} \right) & \left|  \frac{c |f(y)|}{||f||_{p,\psi}} \right|^p W(y) d \mu(y)  \\ & \lesssim \int_X \psi( |F(y)|) \left| F(y) \right|^p d \mu(y) \\ & \qquad + \int_X \tilde \psi( \left[ W(y) \right]^{-\frac{1}{p}} ) \left|  \frac{c |f(y)|}{||f||_{p,\psi}} \right|^p W(y) d \mu(y).
 \end{align*}
Recalling \eqref{sobcond1} and \eqref{themajortheorem} a final time, the second term on the right-hand side is again uniformly bounded by a fixed constant times $c^p$.  Choosing $c$ sufficiently small establishes \eqref{logsob}.

Regarding the final comment from section \ref{commentsec}, the needed improvement of lemma \ref{logsoblemma} is as follows.
Suppose $G \subset {\mathbb Z}_{\geq 0}^2 \cap \set{(k,j) \in \Z^2}{ k > j} $ contains all pairs $(k,k-1)$ for $k \geq 1$.  For any $k > 0$, let $G_k := \set{j \in {\mathbb Z}_{\geq 0}}{ (k,j) \in G}$.  Then under the same hypotheses as lemma \ref{logsoblemma},
\begin{equation*} \psi(|L|) u(|L|) \leq \psi \left( \frac{|a_0|}{1-\theta} \right) u(|L|) + \sup_{k \geq 0} \frac{1}{\# G_k} \sum_{j \in G_k} \psi \left( \frac{|a_k|}{1-\theta} \right) u \left( \frac{|u_j - L|}{\theta} \right). 
\end{equation*}
The proof is essentially exactly the same as the proof of \eqref{super1}, since when $k$ is the minimal index for which $|a_k - L| \leq \theta |L|$, then it is actually true that $|a_{j} - L| > \theta |L|$ for all $j < k$ instead of merely for $j = k-1$.  In particular, one may take \eqref{sidenote}, replace $k-1$ with $j$, then average $j$ over all of $G_k$.  This in turn leads one to generalize the inequality corresponding to \eqref{sidenote2} by replacing $j+1$ with $k$ and then averaging over $j \in G_k$.  Now the extra factor of $(\#G_k)^{-1}$ gives more decay and allows one to improve $K_{p,\psi}$.  Specifically one should choose
\[  G := \set{(k,j)}{\psi \left( \left[ \mu(U_{j+1}^*(y)) \right]^{-\frac{1}{p}} \right) \geq  \epsilon \psi \left( \left[ \mu(U_{k}^*(y)) \right]^{-\frac{1}{p}} \right) \mbox{ and } k > j} \]
for some fixed, small $\epsilon$.



\bibliography{mybib}
\end{document}